\documentclass[a4paper,11pt]{amsart}

\usepackage{amsmath}
\usepackage{amsfonts}
\usepackage{amssymb}
\usepackage{graphicx}
\usepackage[abbrev,alphabetic]{amsrefs}
\usepackage{color}
\usepackage{soul,xcolor} 
\setstcolor{blue}

\usepackage{amsthm}
\usepackage{comment}
\usepackage[all,cmtip]{xy}
\usepackage{tikz-cd}
\usetikzlibrary{cd}

\newcommand{\mbN}{\mathbb{N}}
\newcommand{\mbQ}{\mathbb{Q}}

\newcommand{\mbZ}{\mathbb{Z}}

\newcommand{\mcE}{\mathcal{E}}
\newcommand{\mcF}{\mathcal{F}}

\newcommand{\mcI}{\mathcal{I}}
\newcommand{\mcL}{\mathcal{L}}
\newcommand{\mcO}{\mathcal{O}}

\DeclareMathOperator{\Supp}{Supp}
\DeclareMathOperator{\Spec}{Spec}

\DeclareMathOperator{\GL}{GL}

\DeclareMathOperator{\Exc}{Exc}

\newcommand*{\sheafhom}{\mathcal{H}\kern -.5pt om}
\newcommand*{\coloneq}{\mathrel{\mathop:}=}

\theoremstyle{plain}
\newtheorem{theorem}{Theorem}[section]
\newtheorem{proposition}[theorem]{Proposition}
\newtheorem{lemma}[theorem]{Lemma}
\newtheorem{corollary}[theorem]{Corollary}

\newtheorem{question}[theorem]{Question}

\theoremstyle{definition}
\newtheorem{definition}[theorem]{Definition}

\theoremstyle{remark}
\newtheorem{remark}[theorem]{Remark}

\title[On the rationality of Kawamata log terminal singularities]{On the rationality of Kawamata log terminal singularities in positive characteristic}
\author{Christopher Hacon}
\address{Department of Mathematics \\
University of Utah\\
Salt Lake City, UT 84112, USA}
\email{hacon@math.utah.edu}\email{}

\author{Jakub Witaszek}
\address{Department of Mathematics, Imperial College, London, 180 Queen's Gate, 
London SW7 2AZ, UK} 
\email{j.witaszek14@imperial.ac.uk}

\begin{document}

\begin{abstract}
We show that there exists a natural number $p_0$ such that any three-dimensional Kawamata log terminal singularity defined over an algebraically closed field of characteristic $p>p_0$ is rational and in particular Cohen-Macaulay.
\end{abstract}

\subjclass[2010]{14E30, 14J17, 13A35}
\keywords{Kawamata log terminal singularities, rational singularities, positive characteristic}

\maketitle

\section{Introduction}
One of the main goals of algebraic geometry is to understand the structure of smooth projective varieties. The techniques of the 
Minimal Model Program, MMP for short, play a fundamental role in the pursuit of this objective. Unluckily,
starting from dimension three, to apply the techniques of the MMP it is necessary to consider varieties with mild singularities.
From the point of view of the MMP, the most natural class of singularities is that of Kawamata log terminal (klt) singularities, which includes canonical and terminal singularities. 
In characteristic zero, a large part of the MMP for varieties with klt singularities is known to hold (see \cite{bchm06}). In this context it is of course essential to have a good understanding of the properties of klt singularities. Perhaps the most important of these properties is the fact that klt singularities are rational and, in particular, Cohen-Macaulay. This fundamental fact was first proved by Elkik for canonical singularities and was later generalized to klt singularities,  see for example \cite[Theorem 5.22]{KM98} and references therein. 

The main technical tool used in the proof of the results of the MMP (and in the proof of Elkik's result) is the famous Kawamata-Viehweg vanishing which is known to fail in positive characteristic and dimension $\geq 2$.
So it is not surprising that the situation in positive characteristic $p>0$  is much more complicated. In spite of this, it is known that the klt MMP is valid in dimension two (\cite{tanaka12}) and dimension three when the characteristic $p > 5$ (\cite{hx13}, \cite{ctx13}, \cite{birkar13}, \cite{BW17}). It is natural to wonder if in positive characteristic, klt singularities  are rational. In dimension two, this is known to hold for all characteristics, however  
recently, it was shown that this does not hold when $p= 2$ and the dimension is at least three \cite{GNT06}, \cite{kovacs17}, and \cite{CT06PLT} (see Remark \ref{remark:quotient}). 
On the positive side, it is known that klt threefold singularities are Witt-rational when $p>5$ (\cite{GNT06}). This interesting result allows for the extension of Esnault's results on counting points over finite fields to singular varieties. Note that rational singularities are Witt-rational but the converse is not true.
Furthermore, it is known that strongly F-regular singularities are rational.
Note that F-regular singularities are klt but klt threefold singularities need not be strongly F-regular even in large characteristic (\cite{CTW15a}).

The goal of this paper is to show the following result.
\begin{theorem} \label{theorem:main} There exists a natural number $p_0>0$ such that for any three-dimensional Kawamata log terminal pair $(X,\Delta)$ defined over an algebraically closed field of characteristic $p>p_0$, we have that $X$ has rational and, in particular, Cohen-Macaulay singularities. Moreover, if $X$ is $\mbQ$-factorial, then, for any divisor $D$, the sheaf $\mathcal{O}_X(D)$ is Cohen-Macaulay.
\end{theorem}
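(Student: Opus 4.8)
The plan is to reduce the theorem to a vanishing statement for the higher direct images of the structure sheaf and then to prove that vanishing in large characteristic by a dimension reduction to surfaces. Since rationality is local, I first localise at a closed point $x \in X$ and fix a log resolution $f \colon Y \to X$; recall that $X$ has rational singularities precisely when $R^i f_* \mathcal{O}_Y = 0$ for all $i > 0$, and that rationality automatically implies Cohen--Macaulayness. By Grothendieck duality this vanishing is the exact counterpart of the Grauert--Riemenschneider vanishing $R^i f_* \omega_Y = 0$, and either form is the genuine content of the theorem. Using the three-dimensional MMP, which is available for $p > 5$ by \cite{hx13}, \cite{ctx13}, \cite{birkar13}, I may freely replace $X$ by a small $\mbQ$-factorialisation, so I assume $X$ is $\mbQ$-factorial klt.

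The mechanism I would use to prove the vanishing is a reduction to a single exceptional surface. Running a relative MMP produces a plt blow-up $g \colon X' \to X$ extracting a Koll\'ar component, that is, a prime divisor $S$ with $(X', S)$ plt and $-(K_{X'} + S)$ ample over $X$; by adjunction $(S, \mathrm{Diff}_S)$ is a klt log del Pezzo surface. The higher direct images $R^i f_* \mathcal{O}_Y$ are supported on the fibre over $x$, and by the theorem on formal functions they are controlled by the cohomology of $S$ together with the cohomology of its infinitesimal thickenings inside $X'$. The key surface input is then $H^i(S, \mathcal{O}_S) = 0$ for $i > 0$: the group $H^2(S, \mathcal{O}_S)$ is Serre-dual to $H^0(S, \omega_S)$, which vanishes because $-(K_S + \mathrm{Diff}_S)$ is ample, and $H^1(S, \mathcal{O}_S) = 0$ is a Kawamata--Viehweg-type vanishing on the log del Pezzo surface $S$.

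The main obstacle, and the place where the characteristic bound $p_0$ is forced upon us, is precisely this surface vanishing: Kawamata--Viehweg vanishing can fail even for surfaces in positive characteristic. To obtain it for all but finitely many $p$, uniformly in the singularity, I would invoke boundedness of klt log del Pezzo surfaces with coefficients in a fixed finite set: the surfaces $S$ arising as Koll\'ar components of three-dimensional klt singularities form a bounded family, so they spread out over a scheme of finite type over $\mbZ$ whose characteristic-zero fibres satisfy the vanishing by Elkik's theorem and \cite[Theorem 5.22]{KM98}. Since the vanishing of the coherent cohomology is a constructible condition on the base, it then holds on all closed fibres of residue characteristic $p > p_0$ for a single $p_0$. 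Controlling the passage from the clean statement on $S$ to the full higher direct images $R^i f_* \mathcal{O}_Y$, through the nilpotent thickenings and the possibly non-isolated nature of the singularity, is the most delicate part of the argument.

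Finally, the statement that $\mathcal{O}_X(D)$ is Cohen--Macaulay when $X$ is $\mbQ$-factorial follows from the same circle of ideas applied to a twist. As reflexive sheaves on a normal threefold are $S_2$, it suffices to prove the single local vanishing $H^2_x(\mathcal{O}_X(D)) = 0$, which by local duality is a stalk of a higher direct image of the form $R^1 f_* \mathcal{O}_Y(D_Y)$ for the pullback $D_Y$ of the $\mbQ$-Cartier divisor $D$. This is established by the same relative vanishing, now applied to $\mathcal{O}_Y(D_Y)$ rather than $\mathcal{O}_Y$; the $\mbQ$-factoriality of $X$ is used exactly to make $D$ $\mbQ$-Cartier, so that $D_Y$ and the associated vanishing are defined.
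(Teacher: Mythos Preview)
Your overall strategy---extract a Koll\'ar component $S$, reduce the vanishing of higher direct images to a Kawamata--Viehweg-type statement on the log del Pezzo surface $S$, and invoke a uniform bound $p_0$---is exactly the one the paper follows. However, the proposal has real gaps precisely at the step you yourself flag as ``the most delicate part'', and in a couple of auxiliary reductions.

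\textbf{The thickenings.} You write that the higher direct images are ``controlled by the cohomology of $S$ together with the cohomology of its infinitesimal thickenings'', and mention only $H^i(S,\mcO_S)=0$. This is not enough. To descend from $R^if_*\mcO_{X'}(-mS)=0$ (Serre vanishing, $m\gg0$) to $R^if_*\mcO_{X'}=0$ one needs, for \emph{every} $k\geq0$, the vanishing of $H^i$ of the quotient $\mcO_{X'}(-kS)/\mcO_{X'}(-(k+1)S)$. When $S$ is Cartier this quotient is $\mcO_S(-kS|_S)$ and one applies Kawamata--Viehweg on $S$ for each $k$, not just $k=0$. When $S$ is only $\mbQ$-Cartier---the generic situation for a plt blow-up---it is not even clear a priori that this quotient is a reflexive rank-one sheaf on $S$, let alone what its $\mbQ$-linear class is. The paper's main technical contribution (Proposition~\ref{proposition:3folds-ses} and Corollary~\ref{corollary:3folds-ses}) is to prove that it \emph{is} reflexive, of the form $\mcO_S(G_k)$ with $G_k\sim_{\mbQ}-kS|_S-\Delta_k$ for some $0\leq\Delta_k\leq\Delta_{\mathrm{diff}}$. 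This uses two ingredients you do not mention: that $\mcO_{X'}(-kS)$ is $S_3$ near $S$ (which comes from strong F-regularity of the plt blow-up, Proposition~\ref{proposition:f-regular_are_rational}), and a computation on a log resolution to identify the $\mbQ$-class of $G_k$ and bound $\Delta_k$. Only with this in hand can one write $G_k\sim_{\mbQ}K_S+(\Delta_{\mathrm{diff}}-\Delta_k)+(\text{ample})$ and apply the full surface vanishing of Theorem~\ref{theorem:CTW}.

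\textbf{Subsidiary issues.} Your reduction ``replace $X$ by a small $\mbQ$-factorialisation'' is circular: rational singularities do not obviously descend along a small map, since $R^i\pi_*\mcO_{X'}$ need not vanish. The paper instead treats the non-$\mbQ$-factorial case via a separate semiample fibration argument (Proposition~\ref{proposition:non-Q-factorial-case}). You also conflate the plt blow-up $X'$ with a log resolution $Y$; the paper proves $Rf_*\mcO_{X'}=\mcO_X$ for the plt blow-up and then uses that $X'$ itself has rational singularities (again via F-regularity) to pass to an honest resolution. Finally, your boundedness-and-spreading-out justification for $p_0$ is plausible in spirit but does not obviously give the \emph{uniform} Kawamata--Viehweg statement needed here (vanishing for all the twists $\mcO_S(G_k)$, with $\Delta_k$ varying); the paper simply cites Theorem~\ref{theorem:CTW}, which is the required uniform result.
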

It is not known whether the above theorem holds or is false if we just assume that $p>2$. The main tool in the proof of this result is the Kawamata-Viehweg vanishing for log del Pezzo surfaces in large characteristic. The constant $p_0$ comes from this result.
 \begin{theorem}[{\cite{CTW15b}*{Theorem 1.2}}] \label{theorem:CTW} There exists a constant $p_0>0$ with the following property.

 Let $X$ be a projective surface of Fano type defined over an algebraically closed field of characteristic $p>p_0$. Let $\Delta$ be an effective $\mbQ$-divisor such that $(X,\Delta)$ is klt, and let $L$ be a Weil divisor for which $L-(K_X+\Delta)$ is nef and big. Then $H^i(X,\mcO_X(L))=0$ for $i>0$.
\end{theorem}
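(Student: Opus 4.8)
The plan is to deduce the statement from the classical Kawamata-Viehweg vanishing in characteristic zero, which is the only setting where vanishing of this generality is unconditionally available; the positive-characteristic content must then come from a boundedness argument that makes the comparison uniform in $p$.

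First I would cut the cohomology down to a single group. On a projective surface only $H^1$ and $H^2$ can be nonzero, and by Serre duality (valid here since klt surfaces are Cohen-Macaulay) the group $H^2(X,\mcO_X(L))$ is dual to $H^0(X,\mcO_X(K_X-L))$. Since $L-(K_X+\Delta)$ is nef and big and $\Delta\ge 0$, the class $L-K_X=(L-(K_X+\Delta))+\Delta$ is big, so $K_X-L$ is not pseudoeffective and $H^2$ vanishes; the whole statement reduces to $H^1(X,\mcO_X(L))=0$. Using resolution of surface singularities and the two-dimensional MMP, both available in every characteristic, I would transport this to a vanishing of Kawamata-Viehweg type on a smooth rational surface $Y$, namely $H^1(Y,\mcO_Y(L_Y))=0$ with $L_Y-(K_Y+\Delta_Y)$ nef and big and $(Y,\Delta_Y)$ a simple normal crossing klt pair, the fractional discrepancies being absorbed into $\Delta_Y$ so that the nef-and-big hypothesis survives.

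The transfer to characteristic zero is the engine. For a \emph{bounded} family of such data $(Y,\Delta_Y,L_Y)$ one spreads everything out over a base of finite type over $\mbZ$; the generic fibre lives in characteristic zero, where the relevant $H^1$ vanishes by classical Kawamata-Viehweg, and upper semicontinuity of cohomology propagates this to every closed fibre outside a proper closed subset of the base, hence to all $p$ beyond a bound depending only on the family. (Alternatively one could reduce modulo $p^2$ to obtain a lift to $W_2$ and invoke Deligne-Illusie, since $\dim Y = 2 < p$ for large $p$; this again requires the data to live over a fixed base.) Finitely many families then furnish a single uniform $p_0$.

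The crux---and the reason $p_0$ is ineffective---is precisely this boundedness, which fails outright: surfaces of Fano type carrying an arbitrary klt boundary are not bounded, the weighted projective planes $\mbP(1,1,n)$ being all of Fano type. The real work is therefore to split the problem into a genuinely bounded part, to which the spreading-out argument applies, and the residual unbounded strata, which must carry enough extra structure---toric or weighted-projective-like geometry, where Kawamata-Viehweg-type vanishing holds in every characteristic---to be dispatched directly. Making this dichotomy precise, and in particular bounding the singularities of $X$ and the coefficients of $\Delta$ after all reductions, is where essentially all the difficulty lies.
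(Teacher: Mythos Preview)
The paper does not prove this statement: Theorem~\ref{theorem:CTW} is quoted from \cite{CTW15b} and used as a black box throughout. There is no proof in the present paper to compare your attempt against.

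That said, your high-level strategy---split into a bounded family handled by reduction from characteristic zero, plus an unbounded residue handled by explicit geometry---is indeed the shape of the argument in \cite{CTW15b}, but the implementation differs from what you describe in two respects. First, the bounded part is not handled by semicontinuity of $H^1$ on a spread-out family of \emph{triples} $(Y,\Delta_Y,L_Y)$; rather, one bounds only the underlying $\epsilon$-klt log del~Pezzo surfaces (Alexeev), and then uses the fact that a log Fano variety in characteristic zero reduces to a globally $F$-regular variety for $p\gg 0$---global $F$-regularity then yields Kawamata--Viehweg vanishing for \emph{every} Weil divisor $L$ satisfying the hypothesis, with no need to bound $L$ itself. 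Your semicontinuity route would require bounding $L$ as well, which is not possible. Second, the unbounded strata are not disposed of via toric or weighted-projective structure; when the pair is not $\epsilon$-klt for small $\epsilon$, one instead exhibits a Mori fibre space structure over $\mbP^1$ and proves the vanishing by a direct relative argument along the fibration. Your sketch correctly identifies where the difficulty lies but is vague at precisely that point, and the specific mechanisms you propose for each half would not work as stated.
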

We say that a surface $X$ is of Fano type, if there exists a $\mbQ$-divisor $B$ such that $(X,B)$ is klt and $-(K_X+B)$ is ample. Note that the Kawamata-Viehweg vanishing does not hold for rational surfaces in general even in large characteristic (\cite{CT07KV}).

Let us also note that it has been recently shown in \cite{kovacs17b}, that Cohen-Macaulay klt singularities are rational. The results from this paper allowed us to simplify the proof of Theorem \ref{theorem:main}.\\

One possible application of these results stems from Minimal Model Program in  mixed characteristic. Given a positive-characteristic variety $X$, which lifts to characteristic zero, and a basic operation of the MMP $f \colon X \dashrightarrow Y$ such as a contraction or a flip, it is natural to ask if $f$ lifts to characteristic zero as well. As a corollary of Theorem \ref{theorem:main}, we show that this holds for divisorial and flipping contractions of threefolds in large characteristic (see Corollary \ref{c:lift}). Unfortunately, we do not know how to deal with the case of flips.

The article is organised as follows. In Section 2, we discuss rational and Cohen-Macaulay singularities, strong F-regularity, plt blow-ups, and liftings to the rings of Witt vectors. In Section 3, we prove the existence of certain short exact sequences on plt threefolds (see Subsection \ref{subsection:sketch}). In Section 4, we finish the proof of Theorem \ref{theorem:main}. In Section 5 we explore applications to the MMP in mixed characteristic.

\subsection{Sketch of the proof} \label{subsection:sketch}
Assume that $X$ is $\mbQ$-factorial. Consider a \emph{plt blow-up} (Proposition \ref{proposition:plt-blow-up}) at a closed point $x \in X$, that is a birational morphism $f \colon Y \to X$ with an irreducible exceptional divisor $S$ such that
\begin{itemize}
	\item $(Y,S)$ is plt and $\mbQ$-factorial,
	\item $-(K_Y+S)$ is $f$-ample,
	\item $f$ is an isomorphism over $X\backslash \{x\}$.
\end{itemize}
By adjunction, there exists a $\mbQ$-divisor $\Delta_{\mathrm{diff}}$ such that $(S, \Delta_{\mathrm{diff}})$ is a klt log del Pezzo pair.

When $S$ is Cartier, one can apply the following strategy (cf.\ \cite{GNT06} and \cite{CT06PLT}). For every $k\geq 0$, consider
\[
0 \to \mcO_Y(-(k+1)S) \to \mcO_Y(-kS) \to \mcO_S(G_k) \to 0,
\]
where $G_k \sim -kS|_S$. Since
\[
G_k\sim _{\mathbb Q}K_S + \Delta_{\mathrm{diff}} \underbrace{- (K_S + \Delta_{\mathrm{diff}}) + G_k}_{\text{ample}},
\]
by Theorem \ref{theorem:CTW}, we have that $H^1(S, \mcO_S(G_k)) =0$
for every $k\geq 0$, and so $R^1f_*\mcO_Y(-(k+1)S) \to R^1f_*\mcO_Y(-kS)$ is surjective. Since $-S$ is $f$-ample, $R^1f_*\mcO_Y(-kS)=0$ for $k\gg 0$ by Serre vanishing, which implies $R^1f_* \mcO_Y=0$ as well. 
By \cite{hx13} one sees that $Y$ has F-regular singularities along $S$ and hence $Y$ has rational singularities along $S$. Therefore, if $X$ is Cohen-Macaulay, then it has rational singularities at $x$. The fact that  $X$ is Cohen-Macaulay at $x$ follows by a similar argument.

When $S$ is not Cartier, the above short exact sequence may fail to be exact, however we show the existence of the following short exact sequence (Proposition \ref{proposition:3folds-ses})
\begin{equation} \label{eq:ses}
0 \to \mcO_Y(-(k+1)S) \to \mcO_Y(-kS) \to \mcO_S(G_k) \to 0,
\end{equation}
where $G_k$ is a Weil divisor satisfying $G_k \sim_{\mbQ} -kS|_S - \Delta_k$ for some effective $\mbQ$-divisor $\Delta_k$ on $S$ such that $\Delta_k \subseteq \Delta_{\mathrm{diff}}$. Therefore, we can apply the same argument as above with minor changes. 

When $X$ is not $\mbQ$-factorial, we apply Proposition \ref{proposition:plt-blow-up}, Proposition \ref{proposition:non-Q-factorial-case}, and the relative F-inversion of adjunction (Lemma \ref{lemma:relative-inversion-of-adjunction}).  

 \section{Preliminaries}
We say that a scheme $X$ is a \emph{variety} if it is integral, separated, and of finite type over a field $k$. Throughout this paper, $k$ is an arbitrary field of positive characteristic $p>0$. For a $k$-variety $X$, we denote the relative canonical divisor $K_{X/\Spec k}$ by $K_X$ (see \cite{kollar13}*{Definition 1.6}).

We refer to \cite{KM98} for basic definitions in birational geometry, and to \cite{kollar13} whenever $k$ is not algebraically closed. We say that $(X,\Delta)$ is a \emph{log pair} if $X$ is normal, $\Delta$ is an effective $\mbQ$-divisor, and $K_X+\Delta$ is $\mathbb{Q}$-Cartier.

Given a variety $X$ and a Weil divisor $D$ on $X$, we say that $f \colon Y \to X$ is a \emph{log resolution} of $(X,D)$ if $Y$ is regular, $\mathrm{Ex}(f)$ has pure codimension one and $(Y, \Supp(f^{-1}(D)+\mathrm{Exc}(f)))$ has simple normal crossings (see \cite{kollar13}*{Definition 1.8 and 1.12}). If $X$ is of dimension at most three, then a log resolution of $(X,D)$ always exists (see \cite{lip78}, \cite{cutkosky09}, \cite{CP08}, and \cite{CP09}).

\begin{theorem}[{\cite[Theorem 10.4]{kollar13}}] \label{thm:relKV} Let $X$ be a regular surface over a field $k$ and let $f \colon  X \to Y$ be a proper, generically finite morphism with exceptional curves $C_i$ such that $\bigcup_i C_i$ is connected. Let $L$ be a line bundle on $X$ and assume that there exist $\mbQ$-divisors $N$ and $\Delta = \sum d_i C_i$ such that:
\begin{itemize}
	\item $L \equiv_{f} K_X + \Delta + N$,
	\item $N \cdot C_i \geq 0$ for every $i$,
	\item $ 0 \leq d_i < 1$ for every $i$. 
\end{itemize}
Then $R^1f_*L = 0$.
\end{theorem}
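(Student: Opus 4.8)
The plan is to deduce this relative vanishing from the theorem on formal functions together with an inductive ``peeling'' of the exceptional cycle, the arithmetic being controlled by the negative definiteness of the intersection matrix $(C_i\cdot C_j)$. The statement is local on $Y$, and since $R^1f_*\mcO_X(L)$ is coherent while $f$ contracts each $C_i$ to a point and $\bigcup_i C_i$ is connected, all the $C_i$ map to a single point $y\in Y$, so the sheaf is supported at $y$ and it suffices to show that its completion there vanishes. Writing $E=\sum_i C_i$ for the reduced exceptional divisor, the scheme-theoretic fibre over $y$ and the divisors $nE$ define cofinal systems of infinitesimal neighbourhoods, so by the theorem on formal functions the completed stalk is $\varprojlim_n H^1(nE,\mcO_X(L)|_{nE})$. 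Hence I would reduce everything to proving that $H^1(Z,\mcO_X(L)|_Z)=0$ for \emph{every} effective cycle $Z=\sum_i a_iC_i$ with $a_i\in\mbZ_{\geq 0}$ supported on the exceptional curves.

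I would prove this by induction on $\sum_i a_i$, the case $Z=0$ being vacuous. Assuming $Z\neq 0$, suppose I can find a component $C=C_i$ with $a_i\geq 1$ and set $Z'=Z-C$. From the exact sequence
\[
0 \to \mcO_X(L-Z')|_C \to \mcO_X(L)|_Z \to \mcO_X(L)|_{Z'} \to 0
\]
and the inductive vanishing of $H^1(Z',\mcO_X(L)|_{Z'})$, the group $H^1(Z,\mcO_X(L)|_Z)$ vanishes as soon as $H^1(C,\mcO_X(L-Z')|_C)=0$. Since $X$ is regular, $C$ is an effective Cartier divisor and hence Gorenstein, so adjunction gives $\deg\omega_C=(K_X+C)\cdot C$; as $C$ is integral, Serre duality then shows $H^1(C,\mcO_X(L-Z')|_C)=0$ whenever $(L-Z')\cdot C>(K_X+C)\cdot C$. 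Using the hypothesis $L\equiv_f K_X+\Delta+N$ and cancelling the $C\cdot C$ terms, this required inequality rewrites simply as $(\Delta+N-Z)\cdot C>0$.

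The crux, and the step I expect to be the main obstacle, is producing a component $C$ with $a_i\geq 1$ satisfying $(\Delta+N-Z)\cdot C>0$; this is where negative definiteness of the exceptional intersection matrix enters (a classical fact for curves contracted by a proper morphism). I would consider the $\mbQ$-divisor $Z-\Delta=\sum_i(a_i-d_i)C_i$ and split it as $P-N_0$, where $P=\sum_{a_i\geq 1}(a_i-d_i)C_i$ is effective and nonzero (each coefficient $a_i-d_i>0$ since $d_i<1\leq a_i$) and $N_0=\sum_{a_i=0}d_iC_i$ is effective with support disjoint from $\Supp P$. Negative definiteness forces $P\cdot P<0$, so some component $C=C_i\in\Supp P$, necessarily with $a_i\geq 1$, satisfies $P\cdot C<0$; as $N_0\cdot C\geq 0$ (distinct effective exceptional curves meet nonnegatively) and $N\cdot C\geq 0$ by hypothesis, we obtain $(Z-\Delta)\cdot C=P\cdot C-N_0\cdot C<0$ and therefore $(\Delta+N-Z)\cdot C=-(Z-\Delta)\cdot C+N\cdot C>0$. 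This single component both tells me which curve to peel off and supplies the strict degree bound needed for the vanishing in the previous paragraph, so the induction closes and, via formal functions, yields $R^1f_*\mcO_X(L)=0$.
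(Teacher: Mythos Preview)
The paper does not supply its own proof of this statement; it is quoted verbatim as \cite[Theorem 10.4]{kollar13} and used as a black box. Your argument is correct and is essentially the standard proof found in Koll\'ar's book: reduce via formal functions to vanishing on thickenings of the exceptional locus, then peel off one component at a time, using negative definiteness of the intersection form to select a component $C$ on which the needed degree inequality $(\Delta+N-Z)\cdot C>0$ holds, and concluding by Serre duality on the integral Gorenstein curve $C$. There is nothing to compare against in the paper itself, and your write-up matches the cited source.
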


\subsection{Rational and Cohen-Macaulay singularities} \label{ss:rat-cm-sing}
We say that a variety $X$ has \emph{rational singularities} if it is Cohen-Macaulay and there exists a log resolution $f \colon Y \to X$ such that $Rf_*\mcO_Y = \mcO_X$. 
Assuming the existence of resolutions of singularities, the phrase \emph{there exists} can be replaced by \emph{for every} (this follows from the main result in \cite{CR15}). For the definition of Serre's condition $S_d$ and Cohen-Macaulay singularities we refer to \cite[Definition 5.1 and Definition 5.2]{kollar13}.

\begin{proposition} \label{proposition:CM_kollar}
Let $f \colon Y \to X$ be a proper birational morphism of normal varieties and let $D$ be a Weil divisor on $Y$ such that $\mcO_Y(D)$ is Cohen-Macaulay. Assume that $R^if_*\mcO_Y(D)=0$ and $R^if_*\mcO_Y(K_Y-D)=0$ for all $i>0$. Then $f_*\mcO_Y(D)$ is Cohen-Macaulay.
\end{proposition}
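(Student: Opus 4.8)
The plan is to deduce the Cohen-Macaulayness of $\mcF \coloneq f_*\mcO_Y(D)$ from Grothendieck duality, using the standard characterization that a coherent sheaf of full dimension $n = \dim X = \dim Y$ is Cohen-Macaulay exactly when its derived dual into the dualizing complex is concentrated in a single cohomological degree. Since all schemes in sight are of finite type over a field, the dualizing complexes $\omega_X^\bullet$ and $\omega_Y^\bullet$ (normalized so that the canonical sheaf of a variety sits in degree $-n$) exist, and they are compatible via $f^!\omega_X^\bullet = \omega_Y^\bullet$, because the structure morphisms satisfy $a_Y = a_X \circ f$ and hence $\omega_Y^\bullet = a_Y^!k = f^! a_X^! k = f^!\omega_X^\bullet$. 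The first hypothesis $R^if_*\mcO_Y(D)=0$ for $i>0$ gives $Rf_*\mcO_Y(D) = \mcF$, so Grothendieck duality for the proper morphism $f$ yields
\[
R\sheafhom_X(\mcF, \omega_X^\bullet) = R\sheafhom_X(Rf_*\mcO_Y(D), \omega_X^\bullet) = Rf_*\, R\sheafhom_Y(\mcO_Y(D), \omega_Y^\bullet).
\]

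The computational heart is the inner dual. Because $\mcO_Y(D)$ is Cohen-Macaulay of dimension $n$, the complex $R\sheafhom_Y(\mcO_Y(D), \omega_Y^\bullet)$ is concentrated in degree $-n$; let $\mathcal{G}$ denote its unique cohomology sheaf, which is again $S_2$ and of rank one. On the regular locus $U \subseteq Y$, whose complement has codimension at least two since $Y$ is normal, we have $\omega_Y^\bullet|_U = \omega_U[n]$ and $\mcO_Y(D)|_U$ invertible, so $\mathcal{G}|_U = \mcO_U(K_U - D)$. As $\mathcal{G}$ is reflexive it is determined by its restriction to $U$, whence $\mathcal{G} = \mcO_Y(K_Y - D)$ and
\[
R\sheafhom_Y(\mcO_Y(D), \omega_Y^\bullet) = \mcO_Y(K_Y-D)[n].
\]

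Finally I would push this forward: the second hypothesis $R^if_*\mcO_Y(K_Y-D)=0$ for $i>0$ gives $Rf_*\big(\mcO_Y(K_Y-D)[n]\big) = f_*\mcO_Y(K_Y-D)[n]$, so that $R\sheafhom_X(\mcF, \omega_X^\bullet) = f_*\mcO_Y(K_Y-D)[n]$ is concentrated in the single degree $-n$. Since $\mcF$ is a rank-one torsion-free sheaf on $X$ and hence of full dimension $n$, the duality criterion lets me conclude that $\mcF$ is Cohen-Macaulay. The main obstacle is not any single estimate but the bookkeeping: fixing the normalization of the dualizing complexes so the Cohen-Macaulay duals land in degree $-n$, justifying the compatibility $f^!\omega_X^\bullet=\omega_Y^\bullet$, and correctly invoking the duality characterization of Cohen-Macaulayness; the genuinely content-bearing step is the reflexive-hull identification $\mathcal{G}=\mcO_Y(K_Y-D)$, which rests only on agreement in codimension one.
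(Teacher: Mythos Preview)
Your proof is correct and follows essentially the same approach as the paper's: both identify $R\sheafhom_Y(\mcO_Y(D),\omega_Y^\bullet)$ with $\mcO_Y(K_Y-D)[n]$ via Cohen--Macaulayness and reflexivity, and then use Grothendieck duality together with the vanishing hypotheses. The only difference is packaging---the paper outsources the duality computation to \cite[Corollary 5.70]{KM98} and \cite[Theorem 2.74]{kollar13}, whereas you spell it out directly.
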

\begin{proof} Since $D$ is Cohen-Macaulay,  so is $\sheafhom_{\mathcal{O}_Y}(\mcO_Y(D), \omega_Y)$ (see \cite[Corollary 5.70]{KM98}). In particular, the latter sheaf is reflexive, and hence it is isomorphic to $\mcO_Y(K_Y-D)$. Therefore, the proposition follows from \cite[Theorem 2.74]{kollar13}.
\end{proof}

The following result is well known.
\begin{proposition}[{\cite[Lemma 8.1]{kovacs17b}}] \label{prop:rational_and_CM_is_good} Let $X$ be a variety with rational singularities. Then $Rf_*\omega_Y = \omega_X$ for every resolution $f \colon Y \to X$.
\end{proposition}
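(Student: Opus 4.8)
The plan is to deduce the statement from Grothendieck duality applied to the proper birational morphism $f$. First I would record that, since $X$ has rational singularities, the defining condition $Rf_*\mcO_Y = \mcO_X$ holds not merely for one but for every resolution (as noted above, via \cite{CR15}); in particular it holds for the given $f$. Write $n = \dim X = \dim Y$. Because $Y$ is regular, its dualizing complex is $\omega_Y[n]$, and because $X$ is Cohen-Macaulay (part of the definition of rational singularities), its dualizing complex is $\omega_X[n]$, where $\omega_X$ is the (honest) dualizing sheaf.

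Next I would invoke Grothendieck duality for the proper morphism $f$, which for a bounded complex $\mathcal{F}^\bullet$ of coherent sheaves on $Y$ gives a natural isomorphism
\[
Rf_* R\sheafhom_{\mcO_Y}(\mathcal{F}^\bullet, \omega_Y[n]) \cong R\sheafhom_{\mcO_X}(Rf_* \mathcal{F}^\bullet, \omega_X[n]).
\]
Taking $\mathcal{F}^\bullet = \mcO_Y$, the left-hand side becomes $Rf_*\omega_Y[n]$, while on the right-hand side the rationality hypothesis $Rf_*\mcO_Y = \mcO_X$ yields $R\sheafhom_{\mcO_X}(\mcO_X, \omega_X[n]) = \omega_X[n]$. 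Shifting by $[-n]$ then gives $Rf_*\omega_Y \cong \omega_X$, as desired.

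Concretely, under these identifications the isomorphism is realized by the Grothendieck trace map, which is the dual of the structural morphism $\mcO_X \to Rf_*\mcO_Y$; the latter is an isomorphism precisely by the rationality assumption, and since the duality functor sends isomorphisms to isomorphisms, its dual is an isomorphism as well. This gives an alternative, map-theoretic way to see the conclusion without unwinding the abstract isomorphism.

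The main point to verify carefully is the reduction of the two dualizing complexes to shifts of honest sheaves, and this is where the two running hypotheses enter essentially: regularity of $Y$ gives that its dualizing complex is $\omega_Y[n]$, and the Cohen-Macaulay property of $X$ — built into the definition of rational singularities — is exactly what guarantees that its dualizing complex is concentrated in a single degree and equals $\omega_X[n]$. Without the Cohen-Macaulay hypothesis the statement would have to be phrased in terms of the full dualizing complex rather than the dualizing sheaf. All the remaining ingredients, namely properness of $f$ together with the boundedness and coherence of the complexes involved, are automatic for a resolution of a variety.
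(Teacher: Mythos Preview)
Your proof is correct and follows essentially the same approach as the paper: the paper's argument (present in the source as a commented-out block) also applies Grothendieck duality to $Rf_*\mcO_Y = \mcO_X$, using that $X$ Cohen--Macaulay gives $\omega_X^{\bullet} \simeq \omega_X[d]$, to obtain $Rf_*\omega_Y[d] \simeq R\sheafhom_X(Rf_*\mcO_Y, \omega_X^{\bullet}) \simeq \omega_X[d]$. Your additional remarks on the trace map and on why the Cohen--Macaulay hypothesis is essential are fine elaborations but not needed beyond what the paper records.
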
 

Let us recall that a rank one sheaf  $\mathcal{F}$ on a normal variety $X$ is reflexive if and only if it is divisorial, that is of the form $\mcO_X(D)$ for some Weil divisor $D$.  Moreover, $i_*(\mathcal{F}|_{U}) \simeq \mcF$, where $i \colon U \to X$ is an inclusion of an open subset whose complement is of codimension at least two.

\begin{lemma} \label{lemma:CM} Let $X$ be a normal variety, let $S \subset X$ be a prime divisor, and let $D$ be any Weil divisor. For the following exact sequence
\[
0 \to \mcO_X(-S + D) \to \mcO_X(D) \to \mcE \to 0,
\]
we have $\Supp \mcE = S$. Moreover, if $\mcO_X(-S+D)$ and $\mcO_X(D)$ satisfy Serre's condition $S_3$, then $\mcE$ is reflexive as a sheaf on $S$.
\end{lemma}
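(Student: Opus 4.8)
The plan is to prove Lemma~\ref{lemma:CM} in two parts. First I would establish the support statement, which should be essentially formal. The sheaf $\mcE$ is by definition the cokernel of the inclusion $\mcO_X(-S+D) \hookrightarrow \mcO_X(D)$. Away from $S$, the Weil divisors $-S+D$ and $D$ agree as divisors (since $S$ does not meet the locus), so the inclusion is an isomorphism there, giving $\mcE|_{X \setminus S} = 0$ and hence $\Supp \mcE \subseteq S$. Conversely, at the generic point of $S$ the two reflexive sheaves differ exactly by the local equation of $S$, so the quotient is nonzero there; thus $\Supp \mcE = S$. I would phrase this by restricting to the open set $U \subseteq X$ where $X$ is regular and $S$ is Cartier (whose complement has codimension at least two), where the sequence becomes the familiar ideal-sheaf sequence twisted by $\mcO_X(D)$, and $\mcE|_U \cong \mcO_S(D)|_{S \cap U}$ is supported precisely on $S \cap U$.

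For the reflexivity statement, the key point is that $\mcE$, being supported on $S$, is naturally a sheaf on $S$, and I want to show it is reflexive as such (equivalently, divisorial/$S_2$ on the normal variety $S$). The natural strategy is to use the $S_3$ hypotheses on the two flanking sheaves together with the long exact sequence of local cohomology (depth). Recall that a coherent sheaf on a normal variety is reflexive if and only if it satisfies Serre's condition $S_2$. So I would aim to show that $\mcE$ satisfies $S_2$ as a sheaf on $S$. The idea is a depth chase: for a point $s \in S$ of codimension $c$ in $S$ (hence codimension $c+1$ in $X$, as $S$ is a divisor), the condition that $\mcO_X(-S+D)$ and $\mcO_X(D)$ are $S_3$ gives lower bounds on their depths at $s$, namely $\min(3, c+1)$. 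Feeding these into the depth inequalities coming from the short exact sequence
\[
0 \to \mcO_X(-S+D) \to \mcO_X(D) \to \mcE \to 0
\]
should force $\operatorname{depth}_s \mcE \geq \min(2, c)$, which is exactly $S_2$ for $\mcE$ viewed on $S$.

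Concretely, localizing at $s$ and writing $A = \mcO_{X,s}$, the long exact sequence in local cohomology $H^i_{\mfm}(-)$ relates the depths: from $0 \to M' \to M \to E \to 0$ one has $\operatorname{depth} E \geq \min(\operatorname{depth} M, \operatorname{depth} M' - 1)$. With $\operatorname{depth} M, \operatorname{depth} M' \geq \min(3, c+1)$, this yields $\operatorname{depth} E \geq \min(3, c+1) - 1 = \min(2, c)$ when $c \geq 1$; and for $c=0$, i.e.\ the generic point of $S$, $\mcE$ is just a nonzero module over the DVR/field, which is automatically fine. Since $s$ was an arbitrary point of codimension $c$ in $S$, this is precisely the statement that $\mcE$ is $S_2$ on $S$, hence reflexive.

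I expect the main obstacle to be bookkeeping the shift between codimension in $X$ and codimension in $S$, and making sure the depth estimate from the short exact sequence is applied with the correct index so that the single ``$-1$'' loss is absorbed by the extra ``$+1$'' coming from $S$ being a divisor. One subtle point worth checking is that $\mcE$ genuinely is an $\mcO_S$-module (annihilated by the ideal of $S$) rather than merely supported on $S$ set-theoretically; this follows because on the regular locus $\mcE$ is a line bundle on $S$, and reflexivity then lets us conclude on all of $S$. I would present the depth computation cleanly via Serre's criterion rather than invoking duality, since the $S_3$ hypotheses are exactly tuned to the $S_2$ conclusion after the divisorial shift.
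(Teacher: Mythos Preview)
Your approach is essentially the paper's: the paper notes directly that $\mcI_S$ annihilates $\mcE$ (because $\mcI_S\cdot\mcO_X(D)\subseteq\mcO_X(-S+D)$, as $\operatorname{div}(f)\geq S$ for any local section $f$ of $\mcI_S$) and that the sequence is the usual restriction outside codimension two, and then for the second assertion simply cites \cite{kollar13}*{Lemma 2.60}, which is precisely the depth chase you spell out.

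Two small points are worth tightening. First, your justification that $\mcE$ is genuinely an $\mcO_S$-module (``reflexivity then lets us conclude on all of $S$'') is circular as phrased, since reflexivity is what you are proving; use the direct inclusion $\mcI_S\cdot\mcO_X(D)\subseteq\mcO_X(-S+D)$ instead. Second, you write ``$S_2$ on the normal variety $S$'', but the lemma does not assume $S$ is normal, so the equivalence $S_2\Leftrightarrow$ reflexive is not available in general. In fact Koll\'ar's Lemma~2.60 only concludes that $\mcE$ is $S_2$ on $S$ (equivalently, determined by its restriction to any open with complement of codimension $\geq 2$); this is what your depth argument actually yields and is exactly what is used in Proposition~\ref{proposition:3folds-ses}, where $S$ is known to be normal so that $S_2$ does give reflexivity.
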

\begin{proof}
Since the ideal sheaf $\mcI_S$ of $S$ annihilates $\mcE$ and the sequence above is the standard restriction exact sequence outside of a closed subset of codimension two, we have that $\Supp(\mcE)=S$. The second part of the lemma follows from \cite{kollar13}*{Lemma 2.60}.
\end{proof}

\subsection{Strongly F-regular singularities}
The proof of the main theorem makes use of the concept of F-regular singularities.  We refer the reader to \cite{schwedetucker12} for a comprehensive treatment of this topic. The sole purpose of introducing them here is to show, summing up known results, that a plt threefold singularity $(X,S)$ in characteristic $p>5$ is rational near $S$ and, assuming $\mbQ$-factoriality, any reflexive rank one sheaf on $X$ is Cohen-Macaulay near $S$ (Proposition \ref{proposition:f-regular_are_rational}, Proposition \ref{proposition:plt_are_f-regular}, and, for the non-$\mbQ$-factorial case, Proposition \ref{proposition:relative-rationality}). This shows that singularities of plt blow-ups possess these desirable properties (see Proposition \ref{proposition:plt-blow-up} and Proposition \ref{proposition:non-Q-factorial-case}). 
For the convenience of the reader, we recall the basic definitions.
\begin{definition} For an F-finite scheme X of characteristic $p>0$ and an effective $\mbQ$-divisor $\Delta$, we say that $(X,\Delta)$ is \emph{globally F-split} if for every $e \in \mbZ_{>0}$, the natural morphism
\[
\mcO_X \to F^e_* \mcO_X(\lfloor(p^e-1)\Delta \rfloor)
\]
splits in the category of sheaves of $\mcO_X$-modules.

We say that $(X,\Delta)$ is \emph{globally F-regular} if for every effective divisor $D$ on $X$ and every big enough $e \in \mbZ_{>0}$, the natural morphism
\[
\mcO_X \to F^e_* \mcO_X(\lfloor(p^e-1)\Delta \rfloor + D)
\]
splits.

We say that $(X,\Delta)$ is \emph{purely globally F-regular}, if the definition above holds for those $D$ which intersect $\lfloor \Delta \rfloor$ properly (see \cite{Das15}*{Definition 2.3(2)}).
\end{definition}
The local versions of the above notions are called F-purity, strong F-regularity, and pure F-regularity, respectively. Moreover, given a morphism $f \colon X \to Y$, we say that $(X,\Delta)$ is F-split, F-regular, or purely F-regular over $Y$, if the corresponding splittings hold locally over $Y$ (see \cite{hx13}*{Definition 2.6}).

\begin{remark} \label{remark:relative_f-split} Assume that $(X,\Delta)$ is F-split over $Y$ with respect to $f \colon X \to Y$. Then for any affine open subset $U \subseteq Y$, we have that $(f^{-1}(U), \Delta|_{f^{-1}(U)})$ is globally F-split (see \cite{hx13}*{Proposition 2.10}, cf.\ \cite{cgs14}*{Remark 2.8}). In particular, if $u \colon Y \to Y'$ is affine, then $(X,\Delta)$ is also F-split over $Y'$ with respect to $u \circ f$. Analogous statements hold for F-regularity and pure F-regularity.
\end{remark}

\begin{proposition} \label{proposition:f-regular_are_rational} Let $X$ be a strongly F-regular variety. Then $X$ has rational singularities. Moreover, if $X$ is $\mbQ$-factorial and of dimension at least three, then any reflexive sheaf of rank one on it is $S_3$.
\end{proposition}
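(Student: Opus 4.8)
The plan is to treat the two assertions separately, deriving the first from the standard implications between F-singularities and the second from a Frobenius action on local cohomology.

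For the first assertion, I would invoke the chain ``strongly F-regular $\Rightarrow$ F-rational $\Rightarrow$ rational.'' Strong F-regularity implies, by Hochster--Huneke, that $X$ is normal and Cohen--Macaulay, and it implies F-rationality; Smith's theorem (with later contributions of Hara and Mehta--Srinivas) then gives that $X$ has rational singularities, i.e.\ for a log resolution $f\colon Y\to X$ one has $R^if_*\mcO_Y=0$ for $i>0$. Together with Cohen--Macaulayness this is exactly the notion of rational singularities fixed in Subsection~\ref{ss:rat-cm-sing}. The only point requiring attention is that these results are stated for our F-finite, excellent setting, which holds since $X$ is of finite type over a field; see \cite{schwedetucker12} for a uniform treatment.

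For the second assertion, write $\mcF=\mcO_X(D)$ for a Weil divisor $D$. Since $X$ is normal, $\mcF$ is reflexive and hence already satisfies $S_2$, so the task is to promote $S_2$ to $S_3$. I would argue locally at a point $x$ of codimension $\geq 3$ and show $\operatorname{depth}_x\mcF\geq 3$, equivalently $H^i_x(\mcF)=0$ for $i\leq 2$; the vanishing for $i=0,1$ is the $S_2$ property, so the entire content is $H^2_x(\mcF)=0$. The key is to equip this module with a Frobenius action. Using $\mbQ$-factoriality, choose $m>0$ with $mD$ Cartier and then $e$ with $p^e\equiv 1\pmod m$, so that $(p^e-1)D$ is Cartier and $\mcO_X(p^eD)\cong\mcO_X(D)$ locally near $x$. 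The $e$-th Frobenius $\mcF\to F^e_*\mcO_X(p^eD)\cong F^e_*\mcF$ then realises $\mcF$, via the splitting guaranteed by strong F-regularity, as a local direct summand of $F^e_*\mcF$. Applying local cohomology, which commutes with the finite pushforward $F^e_*$, exhibits $H^2_x(\mcF)$ as a direct summand of $F^e_*H^2_x(\mcF)$, that is, an injective Frobenius action; strong F-regularity (triviality of tight closure, in the spirit of Hartshorne--Speiser--Lyubeznik and Smith's proof of F-rationality) then forces $H^2_x(\mcF)=0$.

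The hard part will be making this last step precise. The tight-closure machinery is cleanest for the top local cohomology module, whereas here I must control the intermediate module $H^2_x(\mcF)$ at points of codimension $\geq 3$; this is exactly why the conclusion is phrased as $S_3$ rather than full Cohen--Macaulayness in higher dimension. I also need to check that the local isomorphism $\mcO_X(p^eD)\cong\mcO_X(D)$ and the splitting are compatible, so that the Frobenius structure placed on $H^2_x(\mcF)$ is genuinely the one governed by strong F-regularity; once this bookkeeping is in place the vanishing follows from the results collected in \cite{schwedetucker12}.
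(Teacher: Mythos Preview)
Your first assertion matches the paper's argument: strongly F-regular $\Rightarrow$ F-rational $\Rightarrow$ rational, citing \cite{schwedetucker12} and then Kov\'acs \cite{kovacs17b}.

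Your second assertion contains a genuine gap. You arrange that $\mcO_X(D)$ is a local direct summand of $F^e_*\mcO_X(D)$, hence that Frobenius acts injectively on $H^2_x(\mcO_X(D))$; but this alone does not force vanishing. Passing to $F^e_*$ does not change depth, so the summand statement is a tautology for depth purposes, and an injective Frobenius on an Artinian local cohomology module is perfectly compatible with the module being nonzero (witness $H^d_m(R)$ for an F-injective Cohen--Macaulay local ring). The tight closure and Hartshorne--Speiser--Lyubeznik machinery you allude to controls the \emph{top} local cohomology of the ring, not an intermediate local cohomology of a module, so the analogy with Smith's proof of F-rationality does not go through as stated. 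A repair along your lines would be to show instead that $\mcO_X(D)$ is locally a summand of $F^e_*\mcO_X$ (equivalently, pass to the index-one cyclic cover trivialising $mD$, which remains strongly F-regular since $p\nmid m$, hence Cohen--Macaulay, and then read off depth from the eigensheaf decomposition); then Cohen--Macaulayness of $\mcO_X$ transfers.

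The paper avoids this entirely by a different, very short trick: pick $m\gg 0$ and a \emph{general} effective divisor $E\sim -(mp+1)D$; the small coefficient makes the pair $(X,\tfrac{1}{mp+1}E)$ strongly F-regular, and then one quotes \cite[Theorem~3.8]{patakfalvi-schwede14} directly to obtain the $S_3$ condition for $\mcO_X(D)$. The point of the choice $mp+1$ is that $(p-1)\tfrac{1}{mp+1}E$ has integral part linearly equivalent to a multiple of $D$, so the Patakfalvi--Schwede depth statement for the pair outputs exactly the sheaf you want.
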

In particular, a reflexive sheaf of rank one on a $\mbQ$-factorial strongly F-regular threefold is Cohen-Macaulay.
\begin{proof}
$X$ is F-rational (see \cite[Appendix C]{schwedetucker12}), and hence it has rational singularities by \cite{kovacs17b}*{Corollary 1.12}.

The last statement in the proposition is a consequence of \cite[Theorem 3.8]{patakfalvi-schwede14}. Indeed, let $\mathcal{O}_X(D)$ be a reflexive sheaf, and take a general effective divisor $E \sim -(mp+1)D$ for $m \gg 0$. Then $(X,\frac{1}{mp+1}E)$ is strongly F-regular, and the result follows.
\end{proof}

\begin{proposition} \label{proposition:plt_are_f-regular} Let $(X,S)$ be a plt three-dimensional pair defined over an algebraically closed field of characteristic $p>5$. Then $S$ is normal and $X$ is strongly F-regular along it.
\end{proposition}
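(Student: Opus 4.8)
The plan is to assemble the statement from three standard ingredients, all of which are available in dimension three once $p>5$: the normality of plt centers, adjunction, and inversion of adjunction for strong F-regularity. Since both normality of $S$ and strong F-regularity are local conditions along $S$, I would first replace $X$ by a neighbourhood of $S$ and reduce to a local question; note that $X$ is $F$-finite, being of finite type over the perfect field $k$. The first step is to settle that $S$ is normal. In characteristic zero this is the usual normality of plt centers, deduced from the connectedness lemma together with Kawamata--Viehweg vanishing; in positive characteristic and dimension three the same conclusion for $p>5$ is supplied by the minimal model program of \cite{hx13}. Granting normality, adjunction provides an effective $\mbQ$-divisor $\Delta_{\mathrm{diff}}$ on $S$, the different, with $(K_X+S)|_S \sim_{\mbQ} K_S + \Delta_{\mathrm{diff}}$, and since $(X,S)$ is plt the pair $(S,\Delta_{\mathrm{diff}})$ is a two-dimensional klt pair.

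The second step is the surface base case: a two-dimensional klt pair over an algebraically closed field of characteristic $p>5$ is strongly F-regular. This is where the bound $p>5$ is genuinely needed, since it is precisely the condition ensuring that the Du Val and cyclic quotient singularities appearing in the classification of klt surface singularities are strongly F-regular, the delicate case being the $E_8$ singularity, which is F-regular exactly when $p>5$.

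The final step is to transfer this conclusion back up to $X$ by inversion of adjunction for F-regularity: with $S$ normal and $(S,\Delta_{\mathrm{diff}})$ strongly F-regular, the pair $(X,S)$ is purely F-regular in a neighbourhood of $S$, which is the content of the F-adjunction results used in \cite{hx13}. It then remains to observe that pure F-regularity of $(X,S)$ forces $X$, that is the pair $(X,0)$, to be strongly F-regular near $S$, and this last passage is formal. Indeed, given any effective divisor $D'$, for $e \gg 0$ the coefficient of $S$ in $D'$ is at most $p^e-1$, so writing $D' = aS + D_0'$ with $D_0'$ meeting $S$ properly we obtain an inclusion $\mcO_X(D') \hookrightarrow \mcO_X((p^e-1)S + D_0')$. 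The natural map $\mcO_X \to F^e_*\mcO_X((p^e-1)S + D_0')$ then factors through $\mcO_X \to F^e_*\mcO_X(D')$, and since the former splits by pure F-regularity, so does the latter; hence $X$ is strongly F-regular near $S$. Combined with Proposition \ref{proposition:f-regular_are_rational}, this yields rationality of $X$ along $S$.

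The hard part lies entirely in the two positive-characteristic inputs, namely the normality of $S$ and the inversion of adjunction, both of which rest on the three-dimensional machinery of \cite{hx13} and both of which can fail for small $p$; the remaining steps—adjunction, the surface classification, and the deduction of strong F-regularity of $X$ from pure F-regularity of $(X,S)$—are comparatively formal.
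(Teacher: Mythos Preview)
Your proposal is correct and follows essentially the same route as the paper: one uses Hara's theorem that two-dimensional klt pairs are strongly F-regular for $p>5$, then F-inversion of adjunction to conclude that $(X,S)$ is purely F-regular near $S$ (whence $S$ is normal and $X$ is strongly F-regular). The only cosmetic difference is that the paper cites \cite{hara98} together with \cite[Theorem A]{Das15}, which packages normality of $S$ and inversion of adjunction into a single statement, whereas you invoke \cite{hx13} for both steps separately and spell out the (formal) passage from pure F-regularity of $(X,S)$ to strong F-regularity of $X$.
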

\begin{proof}
By \cite{hara98} and \cite[Theorem A]{Das15} (cf.\ Lemma \ref{lemma:relative-inversion-of-adjunction}), we have that $(X,S)$ is purely $F$-regular along $S$ and $S$ is normal. This implies F-regularity of $X$ along $S$.
\end{proof}

In order to deal with non-$\mbQ$-factorial singularities, we also need a relative version of the above result.
\begin{proposition} \label{proposition:relative-rationality} Let $(X,S)$ be a plt three-dimensional pair defined over an algebraically closed field of characteristic $p>5$, and let $f \colon X \to Z$ be a proper birational morphism between normal varieties such that $\dim f(S)=2$. If $-(K_X+S)$ is $f$-ample, then $Z$ is strongly F-regular along $f(S)$.
\end{proposition}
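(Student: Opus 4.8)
The plan is to transfer strong F-regularity from $X$ (where it holds along $S$) down to $Z$ along $W \coloneq f(S)$ by pushing Frobenius splittings forward along $f$, using the $f$-ampleness of $-(K_X+S)$ to make those splittings available \emph{relatively} over $Z$. First I would record the geometry: since $f$ is birational and $\dim S = \dim f(S) = 2$, the restriction $f|_S \colon S \to W$ is birational and $W$ is a prime divisor on $Z$; moreover, as $Z$ is normal, $f_*\mcO_X = \mcO_Z$. By adjunction there is a $\mbQ$-divisor $\Delta_{\mathrm{diff}}$ with $(K_X+S)|_S = K_S + \Delta_{\mathrm{diff}}$, so $(S,\Delta_{\mathrm{diff}})$ is klt and $-(K_S+\Delta_{\mathrm{diff}})$ is $(f|_S)$-ample. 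By Proposition \ref{proposition:plt_are_f-regular}, $S$ is normal, $(X,S)$ is purely F-regular along $S$, and $X$ is strongly F-regular along $S$. Finally, by Remark \ref{remark:relative_f-split} it suffices to work over an affine open $Z = \Spec R$, replacing $X$ by its preimage.

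The core step is to upgrade the \emph{local} F-regularity of $X$ along $S$ to \emph{relative global} F-regularity over $Z$. Here the hypothesis that $-(K_X+S)$ is $f$-ample is essential: combined with the relative F-inversion of adjunction (Lemma \ref{lemma:relative-inversion-of-adjunction}) applied to the $(f|_S)$-anti-ample log del Pezzo situation $(S,\Delta_{\mathrm{diff}}) \to W$, I would deduce that $(X,S)$ is purely F-regular over $Z$, and hence that $X$ (with trivial boundary) is F-regular over $Z$ in a neighbourhood of $S$. Concretely, the relative ampleness is what promotes the pointwise splittings near $S$ to splittings of $\mcO_X \to F^e_*\mcO_X(D)$ that are global over $Z$, for effective $D$ and $e \gg 0$.

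With relative global F-regularity in hand I would conclude by pushing splittings forward. For a nonzero $c \in R = \Gamma(X,\mcO_X)$ set $D = \mathrm{div}_X(c) \geq 0$; then $\Gamma(X,\mcO_X(D)) = \tfrac{1}{c}R$, and a global-over-$Z$ splitting $\phi \colon F^e_*\mcO_X(D) \to \mcO_X$ of Frobenius yields, on global sections, an $R$-linear map $\eta \colon F^e_* R \to R$ given by $\eta(F^e_* s) = \Gamma(\phi)\big(F^e_*(s/c)\big)$, which satisfies $\eta(F^e_* c) = \Gamma(\phi)\big(F^e_*(1)\big) = 1$. This is exactly a splitting of the map $R \to F^e_* R$, $1 \mapsto F^e_* c$, so $R$ is strongly F-regular; running this over the relevant affine charts shows that $Z$ is strongly F-regular along $W$.

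The main obstacle I anticipate is the middle step: converting the $f$-ampleness of $-(K_X+S)$ into relative Frobenius splittings over $Z$ concentrated near $S$, that is, genuinely globalizing the local pure F-regularity of Proposition \ref{proposition:plt_are_f-regular} over $Z$ rather than establishing it only at the generic point of $W$. In particular one must control the points of $W$ over which $f$ fails to be an isomorphism — where other $f$-exceptional divisors meet $S$ — and ensure that the preimage of a neighbourhood of $W$ stays F-regular there, so that the pushforward of splittings remains valid. This is precisely where the relative inversion of adjunction and the positivity of $-(K_X+S)$ do the work.
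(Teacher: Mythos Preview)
Your outline is essentially the paper's proof. The architecture is the same: (i) show $(S,\Delta_{\mathrm{diff}})$ is globally F-regular over $f(S)$; (ii) feed this into Lemma \ref{lemma:relative-inversion-of-adjunction} to get $(X,S)$ purely globally F-regular over a Zariski neighbourhood of $f(S)$; (iii) push splittings down to $Z$. Your hand argument for (iii)---taking $D=\mathrm{div}_X(c)$ and reading a splitting of $R\to F^e_*R$, $1\mapsto F^e_*c$ off global sections---is precisely the content of \cite{hx13}*{Lemma 2.12}, which the paper cites instead.

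The one place where you should say more is step (i), which is also exactly the ``main obstacle'' you flag. Lemma \ref{lemma:relative-inversion-of-adjunction} requires as input that $(S,\Delta_{\mathrm{diff}})$ be \emph{globally} F-regular over $f(S)$, not merely locally strongly F-regular. You correctly isolate the relevant structure---$(S,\Delta_{\mathrm{diff}})$ is a klt surface with $-(K_S+\Delta_{\mathrm{diff}})$ ample over $f(S)$---but this is where an external input is needed: \cite{hx13}*{Theorem 3.1} says exactly that such relative log del Pezzo surfaces are globally F-regular over the base in characteristic $p>5$ (one factors through the normalisation $f(S)^\nu\to f(S)$, which is affine, and uses Remark \ref{remark:relative_f-split}). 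Once that is in place, your worry about ``points of $W$ over which $f$ fails to be an isomorphism'' evaporates: the conclusion of Lemma \ref{lemma:relative-inversion-of-adjunction} is already uniform over a full Zariski neighbourhood of $f(S)$, not just generically.
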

This does not follow from Proposition \ref{proposition:plt_are_f-regular} directly, because $K_Z + f(S)$ is almost never $\mbQ$-Cartier. Further, let us note that we will need to apply Proposition 
\ref{proposition:relative-rationality} in the case when $X$ is not $\mbQ$-factorial.
\begin{proof}
As above, $S$ is normal. The normalization morphism $f(S)^\nu \to f(S)$ is affine, and so, by \cite{hx13}*{Theorem 3.1}, the pair $(S, \Delta_{\mathrm{diff}})$, defined by adjunction $K_S + \Delta_{\mathrm{diff}} = (K_X + S)|_S$, is globally $F$-regular over $f(S)$ (see Remark \ref{remark:relative_f-split}). Lemma \ref{lemma:relative-inversion-of-adjunction} and \cite{hx13}*{Lemma 2.12} imply that $Z$ is strongly $F$-regular along $f(S)$.
\end{proof}

In the proposition above, we used a relative version of inversion of adjunction, which is a consequence of \cite{Das15}*{Theorem B}. The proof is very similar to \cite{Das15}*{Corollary 5.4} and \cite{CTW15b}*{Lemma 2.7}, but we include it here for the convenience of the reader.
\begin{lemma} \label{lemma:relative-inversion-of-adjunction}
Let $(X,S+B)$ be a plt pair where $S$ is a prime divisor, and let $f \colon X \to Z$ be a proper birational morphism between normal varieties. Assume that $-(K_X+S+B)$ is $f$-ample and $(\overline{S},B_{\overline{S}})$ is globally $F$-regular over $f(S)$, where $\overline{S}$ is the normalization of $S$, and $B_{\overline{S}}$ is defined by adjunction $K_{\overline{S}} + B_{\overline{S}} = (K_X + S + B)|_{\overline{S}}$. Then $(X,S+B)$ is purely globally F-regular over  a Zariski-open neighbourhood of $f(S) \subseteq Z$. \end{lemma}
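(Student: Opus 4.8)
The plan is to run the $F$-adjunction argument of \cite{Das15}*{Theorem B} (cf.\ \cite{Das15}*{Corollary 5.4} and \cite{CTW15b}*{Lemma 2.7}) in a relative setting, using the $f$-ampleness of $-(K_X+S+B)$ to lift splittings from $\overline{S}$ to $X$. First I would reduce to the case $Z=\Spec R$ affine, which is legitimate because being purely globally F-regular over $Z$ may be tested over affine opens of $Z$ (Remark \ref{remark:relative_f-split}). It then suffices, for every effective Weil divisor $D$ on $X$ meeting $S$ properly and every $e\gg 0$, to produce an $S$-compatible retraction $\phi\colon F^e_*\mcO_X(\lceil(p^e-1)(S+B)\rceil+D)\to\mcO_X$ with $\phi(F^e_*(1))\equiv 1$ along $S$. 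Given such a $\phi$, the function $\phi(F^e_*(1))$ is a unit in a neighbourhood of $S$, hence invertible over $f^{-1}(V)$ for the Zariski-open $V\subseteq Z$ on which it does not vanish; since its value is $1$ along $S$, this $V$ contains $f(S)$, and rescaling $\phi$ by the inverse yields an honest splitting over $V$.

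Next I would record the $F$-adjunction correspondence. By Grothendieck duality for the $e$-th Frobenius, the retractions in $\Hom_{\mcO_X}(F^e_*\mcO_X(\lceil(p^e-1)(S+B)\rceil+D),\mcO_X)$ are identified with the global sections of $F^e_*\mcO_X(L_e)$, where, up to a bounded correction coming from the rounding of $(p^e-1)B$, one has $L_e=(p^e-1)\bigl(-(K_X+S+B)\bigr)-D$. Restricting the $S$-compatible retractions to the normalization $\overline{S}$ via the standard sequence of Lemma \ref{lemma:CM}, and using that the different satisfies $(K_X+S+B)|_{\overline{S}}=K_{\overline{S}}+B_{\overline{S}}$, I obtain retractions in $\Hom_{\mcO_{\overline{S}}}(F^e_*\mcO_{\overline{S}}(\lceil(p^e-1)B_{\overline{S}}\rceil+D|_{\overline{S}}),\mcO_{\overline{S}})$; here the hypothesis that $D$ meets $S$ properly guarantees that $D|_{\overline{S}}$ is a well-defined effective divisor, and passing to $\overline{S}$ together with the different absorbs the conductor when $S$ is not normal. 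This correspondence is the content imported from \cite{Das15}*{Theorem B}.

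The retraction downstairs is supplied by the hypothesis: since $(\overline{S},B_{\overline{S}})$ is globally F-regular over $f(S)$, for $e\gg 0$ there is some $\psi\in\Hom_{\mcO_{\overline{S}}}(F^e_*\mcO_{\overline{S}}(\lceil(p^e-1)B_{\overline{S}}\rceil+D|_{\overline{S}}),\mcO_{\overline{S}})$ with $\psi(F^e_*(1))=1$ over $R$. To lift $\psi$ to an $S$-compatible $\phi$ it is enough that the restriction be surjective on sections over the affine base, and the obstruction is $R^1f_*$ of the kernel sheaf $F^e_*\mcO_X(L_e-S)$. As $F^e$ is affine, $R^1f_*F^e_*\mcO_X(L_e-S)=F^e_*R^1f_*\mcO_X(L_e-S)$, and since $A\coloneq -(K_X+S+B)$ is $f$-ample, for a fixed $D$ the divisor $L_e-S\sim(p^e-1)A-D-S$ (up to bounded rounding) becomes $f$-ample for $e\gg 0$; relative Serre vanishing then kills this first direct image. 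Hence $\psi$ lifts to an $S$-compatible $\phi$, and by construction $\phi(F^e_*(1))|_{\overline{S}}=\psi(F^e_*(1))=1$, so $\phi(F^e_*(1))\equiv 1$ along $S$, completing the construction.

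The hard part will be this lifting step, that is, the relative surjectivity of the $F$-adjunction restriction: the correspondence above is an isomorphism only after passing to $S$-compatible maps, and surjectivity onto the retractions of the different is not formal. It is exactly here that the $f$-ampleness of $-(K_X+S+B)$ is indispensable, since the $(p^e-1)$-fold twist by the $f$-ample class $A$ is what renders the kernel sheaf sufficiently $f$-positive for its first direct image to vanish. Two secondary points I would need to address are the non-normality of $S$, handled throughout by working with $\overline{S}$ and the different in place of $S$, and the verification that the Zariski-open neighbourhood $V$ of $f(S)$ can be chosen independently of $D$, which follows from the openness of the strongly F-regular locus. Together with the fact that the produced retraction only evaluates to a unit near $S$, this last subtlety is why the conclusion is stated over a Zariski-open neighbourhood of $f(S)$ rather than over all of $Z$.
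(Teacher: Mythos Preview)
Your strategy is the same as the paper's---lift a splitting from $\overline S$ to $X$ via $F$-adjunction (\cite{Das15}*{Theorem B}) and kill the obstruction $H^1(X,F^e_*\mcL(-S))$ by relative Serre vanishing using the $f$-ampleness of $-(K_X+S+B)$. The paper also first perturbs $B$ so that the index of $K_X+S+B$ is coprime to $p$ (required for the results of \cite{Das15}) and then invokes \cite{Das15}*{Theorem A} locally to get that $(X,S+B)$ is purely F-regular and $S$ is normal, so one may work on $S$ itself rather than carrying $\overline S$ throughout; you should record this step.

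The one genuine gap is your treatment of the uniformity of $V$. Attacking pure F-regularity directly produces, for each effective $D$, an open $V_D\ni f(S)$ on which the relevant map splits; but $\bigcap_D V_D$ need not be open, and ``openness of the strongly F-regular locus'' is a statement about stalks that does not furnish global splittings over a fixed $f^{-1}(V)$ for all $D$ simultaneously. The paper sidesteps this by reducing, via the standard \cite{schwedesmith10}*{Theorem 3.9} manoeuvre (absorb a given $D$ into an ample perturbation $B\mapsto B+\epsilon H$ with $H$ meeting $S$ properly), to showing only that $(X,S+B)$ is F-\emph{split} over a neighbourhood of $f(S)$. Your lifting argument then applies verbatim to this single splitting (no auxiliary $D$), and the resulting $V$ is automatically independent of $D$ because the Schwede--Smith reduction, together with local pure F-regularity, upgrades F-splitting over $V$ to pure global F-regularity over $V$.
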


\begin{proof}
Since the question is local over $Z$, we can assume that $Z$ is affine, that is $Z = \Spec(R)$ for some ring $R$. Further, by perturbing the coefficients of $B$, we can assume that the Cartier index of $K_X + S + B$ is not divisible by $p$ (see for example the proof of \cite[3.8]{Das15}). By \cite[Theorem A]{Das15}, $(X,S+B)$ is purely F-regular, and $S$ is a normal F-pure centre. By  standard arguments (replacing $B$ by $B+\epsilon H$ for $0 < \epsilon \ll 1$ and a very ample divisor $H$ intersecting $S$ properly), it is enough to show that $(X,S+B)$ is $F$-split over a Zariski-open neighbourhood of $f(S)$ (see the proof of \cite{schwedesmith10}*{Theorem 3.9}). 

Set $\mcL \coloneq \mcO_X((1-p^e)(K_X+S+B))$ and for a sufficiently divisible integer $e\gg 0$ consider the following diagram 

{
\begin{center}
\begin{tikzcd}
H^0(X,F_*^e\mcL) \arrow{r} \arrow{d}{\psi^X_{S+B}} & H^0(S,F_*^e(
\mcL|_S)) \arrow{d}{\psi^S_{B_S}} \arrow{r} & H^1(X,F_*^e(\mcL(-S))) \\
 H^0(X,\mcO_X)  \arrow{r} & H^0(S,\mcO_S), &
\end{tikzcd}
\end{center}}
\noindent where $\psi^X_{S+B}$ and $\psi^S_{B_S}$ are the trace maps, while the horizontal arrows come from the restriction exact sequences. The diagram is well defined and commutes by the fact that $S$ is an F-pure centre and by the equality of the Different and the F-different (see \cite{Das15}*{Theorem B}, cf.\ \cite{CTW15b}*{Subsection 2.2}). Note that $\psi^S_{B_S}$ is surjective, since $(S, B_S)$ is F-split over $f(S)$.

Since $e \gg 0$, the relative Serre vanishing implies that $H^1(X,F_*^e(\mcL(-S))) = 0$. In particular, the upper left horizontal arrow is surjective, and so the composition
\[
H^0(X, F^e_* \mcL) \xrightarrow{\psi^X_{S+B}} H^0(X,\mcO_X) \to H^0(S,\mcO_S)
\] is surjective as well. But $H^0(X,\mcO_X)= H^0(Z,\mcO_Z)=R$, hence this implies that the zero locus of the ideal in $R$ generated by $\mathrm{im}(\psi^X_{S+B})$ is disjoint from $f(S)$. After removing this locus, the surjectivity of $\psi^X_{S+B}$ follows.
\end{proof}

\subsection{Local to global transition}
In the course of the proof of the main theorem we will need the following two results that allow for the transition from a local to a global case.

\begin{proposition}[Generalized plt blow-up] \label{proposition:plt-blow-up}
Let $(X,\Delta)$ be a Kawamata log terminal three-dimensional pair defined over an algebraically closed field of characteristic $p>5$, and let $x \in X$ be a closed point. Assume that $X \backslash \{x\}$ is $\mbQ$-factorial.

Then there exists a projective birational morphism $f \colon Y \to X$ and an effective $\mbQ$-divisor $\Delta_Y$ on $Y$ such that
\begin{itemize}
	\item $f$ is an isomorphism over $X \backslash \{x\}$,
	\item $S \coloneq \Exc f$ is irreducible and anti-$f$-nef,
	\item $(Y, S+\Delta_Y)$ is a $\mbQ$-factorial plt threefold,
	\item $-(K_Y+S+\Delta_Y)$ is $f$-ample.
\end{itemize}
Furthermore, $S$ is normal and $Y$ is strongly F-regular along it.
\end{proposition}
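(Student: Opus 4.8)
Since strong F-regularity and every property in the statement can be verified in a neighbourhood of the fibre over $x$, I would begin by shrinking $X$ to an affine neighbourhood of $x$. With this reduction the final two assertions, that $S$ is normal and that $Y$ is strongly F-regular along $S$, are immediate from Proposition \ref{proposition:plt_are_f-regular}: once $(Y,S+\Delta_Y)$ is known to be a plt threefold, the inequality $\Delta_Y\ge 0$ forces $(Y,S)$ to be plt as well, and Proposition \ref{proposition:plt_are_f-regular} applies. The substance of the proposition is therefore the construction of the morphism $f$ and of the boundary $\Delta_Y$.

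I would produce $Y$ by a relative Minimal Model Program. First I would take a log resolution $h\colon W\to X$ of $(X,\Delta)$ that factors through the blow-up of $x$, and single out an $h$-exceptional prime divisor $S_W$ with centre $\{x\}$ chosen to be a \emph{Koll\'ar component}, so that the pair $(W,\Gamma_W)$ with $\Gamma_W\coloneq h_*^{-1}\Delta+S_W+\sum_i(1-\epsilon_i)E_i$ is plt with round-down $S_W$; here $E_1,\dots,E_n$ are the remaining $h$-exceptional primes and $0<\epsilon_i\ll 1$. Writing $K_W+\Gamma_W=h^*(K_X+\Delta)+\sum_j c_j F_j$ over the exceptional divisors $F_j\in\{S_W,E_i\}$, the coefficient of $S_W$ equals $1+a(S_W,X,\Delta)>0$ and that of $E_i$ equals $1+a(E_i,X,\Delta)-\epsilon_i$, which is positive for $\epsilon_i$ small since $(X,\Delta)$ is klt. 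Hence $K_W+\Gamma_W$ is $h$-numerically equivalent to an effective $h$-exceptional divisor supported on the whole exceptional locus.

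I would then run the relative $(K_W+\Gamma_W)$-MMP over $X$, available for $\mbQ$-factorial dlt threefold pairs when $p>5$ by \cite{hx13}, \cite{birkar13} and \cite{BW17}, and terminating in this range. By the negativity lemma an effective exceptional divisor is never relatively nef while any exceptional divisor survives, so the program consists of divisorial contractions and flips, each preserving $\mbQ$-factoriality and the plt condition. Choosing the $\epsilon_i$ so that all of the $E_i$ are contracted before $S_W$, I reach a $\mbQ$-factorial plt pair $(Y',S'+\Delta')$, with $S'$ the birational transform of $S_W$ the unique exceptional divisor over $X$, and with $-S'$ relatively ample for the next, $(K+\Gamma)$-negative, divisorial contraction of $S'$. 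Because $S'$ is the only exceptional divisor and its centre is the point $x$, the induced morphism $g\colon Y'\to X$ is an isomorphism over $X\setminus\{x\}$.

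It remains to arrange $f$-ampleness, which is the crux. From $K_{Y'}+S'+\Delta'=g^*(K_X+\Delta)+(1+a(S',X,\Delta))S'$ with $1+a(S',X,\Delta)>0$, one finds $-(K_{Y'}+S'+\Delta')\equiv_g (1+a(S',X,\Delta))(-S')$, so that $g$-ampleness of $-(K_{Y'}+S'+\Delta')$, $g$-ampleness of $-S'$, and anti-$g$-nefness of $S'$ are equivalent, and each is equivalent to the condition $g^{-1}(x)=S'$, i.e.\ that no curve over $x$ avoids $S'$. When $X$ is $\mbQ$-factorial at $x$ this holds automatically: then $\rho(Y'/X)=1$, the final MMP step is the divisorial contraction $Y'\to X$ itself, and its $(K+\Gamma)$-negativity delivers the ampleness at once. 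The genuine obstacle, permitted by the hypotheses, is that $X$ may fail to be $\mbQ$-factorial at $x$; then $\rho(Y'/X)\ge 2$, the MMP terminates not at $X$ but at a small $\mbQ$-factorialization $\tilde X\to X$, and no single extremal contraction realises the desired $f$. The example of the vertex of the quadric cone $\{xy=zw\}$, whose ordinary blow-up is the total space of $\mcO(-1,-1)$ over $\mbP^1\times\mbP^1$ and satisfies $f^{-1}(x)=S$ with $\rho(Y/X)=2$, shows that the required $f$ nonetheless exists. I would obtain it by passing from $Y'$ to the relative ample model of $-S'$ over $X$, namely $Y\coloneq\operatorname{Proj}_X\bigoplus_{m\ge 0}g_*\mcO_{Y'}(-mS')$, which contracts the small locus of $\tilde X\to X$ and yields a divisorial contraction $f\colon Y\to X$ with $f^{-1}(x)=S$ and $-S$ $f$-ample. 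The finite generation of this algebra, together with the $\mbQ$-factoriality and the plt property of the resulting $Y$, again rests on the three-dimensional MMP in characteristic $p>5$, and checking these is the technical heart of the argument.
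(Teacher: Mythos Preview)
The paper's own proof is a direct citation to \cite{GNT06}*{Proposition 2.15} for the construction of $f$ and $\Delta_Y$, together with Proposition~\ref{proposition:plt_are_f-regular} for the last sentence; you correctly handle the latter. Your proposal is therefore an attempt to reconstruct the cited argument, and the overall shape---run a relative MMP from a log resolution, invoke the threefold MMP in characteristic $p>5$---is the standard one. Two points, however, are not in order.

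First, the selection of $S_W$ is circular. Declaring $S_W$ ``chosen to be a Koll\'ar component'' presupposes the very object the proposition constructs, and there is no mechanism by which tweaking the $\epsilon_i$ forces the MMP to contract the $E_i$ before $S_W$; the MMP does not offer that kind of control. The correct construction assigns coefficient~$1$ to \emph{every} $h$-exceptional prime (yielding a dlt, not plt, pair $(W,\Gamma_W)$), runs the $(K_W+\Gamma_W)$-MMP over $X$, and uses that $K_W+\Gamma_W-h^*(K_X+\Delta)$ is effective and exceptional together with negativity to see that all exceptional divisors are eventually contracted. Whichever one is contracted \emph{last} becomes $S$; only a posteriori is the pair plt (since a dlt pair with irreducible round-down is plt). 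You should also note that over $X\setminus\{x\}$ the MMP terminates at $X\setminus\{x\}$ itself, by $\mbQ$-factoriality there, which gives the isomorphism away from $x$.

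Second, your treatment of the non-$\mbQ$-factorial point does not go through. Passing to $Y=\operatorname{Proj}_X\bigoplus_{m\ge 0}g_*\mcO_{Y'}(-mS')$ is a small contraction of $Y'$ and therefore destroys $\mbQ$-factoriality in general, contradicting the stated conclusion. Note that the proposition requires only that $-S$ be $f$-\emph{nef}, and the paper explicitly remarks that $-S$ is $f$-ample precisely when $x\in X$ is $\mbQ$-factorial. So the task is the opposite of what you attempt: one must stay on a $\mbQ$-factorial model and arrange the $f$-ampleness of $-(K_Y+S+\Delta_Y)$ by an appropriate choice of $\Delta_Y$ (or by running an MMP with scaling of an ample divisor), while accepting that $-S$ is merely $f$-nef. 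This is where the real content of \cite{GNT06}*{Proposition 2.15} lies, and as you yourself note, your proposal does not carry it out.
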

\noindent When $x \in X$ is $\mbQ$-factorial, then $-S$ is automatically $f$-ample.
\begin{proof} Apart from the last statement, this is a direct consequence of \cite{GNT06}*{Proposition 2.15}.  The strong F-regularity of $Y$ and the normality of $S$ follow from Proposition \ref{proposition:plt_are_f-regular}.
\end{proof}

For the non-$\mbQ$-factorial case of the main theorem, we will also need the following result.
\begin{proposition} \label{proposition:non-Q-factorial-case} Under the assumptions of Proposition \ref{proposition:plt-blow-up}, the $f$-relative semiample fibration $g \colon Y \to Y'$ associated to $-S$ exists. Moreover, $g$ is small, $-S|_S$ is big, and $Y'$ is strongly F-regular along $g(S)$. 
\end{proposition}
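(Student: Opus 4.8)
The plan is to construct the semiample fibration $g$ by running a relative MMP, then establish each of the three claimed properties---smallness of $g$, bigness of $-S|_S$, and strong F-regularity of $Y'$ along $g(S)$---in turn. First I would produce $g \colon Y \to Y'$ as the $f$-relative ample model of $-S$. By Proposition \ref{proposition:plt-blow-up} the divisor $-S$ is $f$-nef, and $(Y,S+\Delta_Y)$ is a $\mbQ$-factorial plt threefold with $-(K_Y+S+\Delta_Y)$ being $f$-ample over the characteristic-$p>5$ base; this is exactly the setting in which the three-dimensional MMP of \cite{hx13}, \cite{ctx13}, \cite{birkar13} applies. Since $-S$ is $f$-nef, one expects $-S$ to be $f$-relatively semiample (one may run a $(K_Y + S + \Delta_Y + t(-S))$-MMP, or invoke a base-point-free-type theorem for the nef divisor $-S$ over $X$), yielding a contraction $g \colon Y \to Y'$ over $X$ with $-S = g^* A$ for some $g$-ample $\mbQ$-divisor $A$ on $Y'$, and with $Y'$ projective over $X$.

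Next I would argue that $g$ is small. Suppose not; then $g$ contracts a divisor, which must be $S$ itself, since $g$ is an isomorphism away from $\Exc f = S$ (the morphism $f$ is an isomorphism over $X \setminus \{x\}$, so $g$ can only contract curves or divisors supported on $S$). If $g$ contracts the divisor $S$, then $-S|_S$ would be pulled back from the lower-dimensional image $g(S)$, forcing $(-S|_S)^2 = 0$ on the surface $S$. But $-S$ is $f$-ample relative to nothing beyond a point---more precisely, one computes that $-S|_S$ is big using that $-S$ is anti-$f$-nef and $f$-nontrivial exactly along $S$, so $(-S|_S)^2 > 0$, contradicting the divisorial contraction. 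Thus $g$ contracts no divisor and is small; simultaneously this computation shows $-S|_S$ is big. I expect that making the positivity bookkeeping precise---relating the $f$-intersection numbers of $-S$ to self-intersections of $-S|_S$ on the surface $S$, and ruling out the degenerate case cleanly---will be the main technical point, and the one most sensitive to whether $S$ is Cartier.

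Finally, for the strong F-regularity of $Y'$ along $g(S)$, I would apply Proposition \ref{proposition:relative-rationality} to the morphism $g \colon Y \to Y'$ with the prime divisor $S$. The hypotheses to check are that $(Y,S)$ is plt (which follows from $(Y,S+\Delta_Y)$ being plt), that $g$ is a proper birational morphism of normal varieties, that $\dim g(S) = 2$, and that $-(K_Y+S)$ is $g$-ample. The dimension count $\dim g(S) = 2$ follows from smallness together with the bigness of $-S|_S$, since $g|_S \colon S \to g(S)$ is then generically finite onto a surface. For the $g$-ampleness of $-(K_Y+S)$, I would write $-(K_Y+S) = -(K_Y+S+\Delta_Y) + \Delta_Y$ and note that $-(K_Y+S+\Delta_Y)$ is $f$-ample hence $g$-ample (as $g$ factors $f$), while $\Delta_Y$ is $g$-nef for a suitable choice within the MMP; after a small perturbation this yields $g$-ampleness. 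Proposition \ref{proposition:relative-rationality} then gives that $Y'$ is strongly F-regular along $g(S)$, completing the proof.
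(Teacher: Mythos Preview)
Your proposal has two genuine gaps, and in both places the paper uses an MMP argument that your approach lacks.

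\textbf{Smallness and bigness.} Your argument is circular: you want to rule out a divisorial $g$ by showing $(-S|_S)^2>0$, but the only way you propose to get $(-S|_S)^2>0$ is ``$-S$ is anti-$f$-nef and $f$-nontrivial exactly along $S$.'' That only gives that $-S|_S$ is nef, not big; when $X$ is not $\mbQ$-factorial there is no direct intersection-theoretic reason for $(-S|_S)^2>0$. The paper reverses the logic: it first proves $g$ is small, and deduces bigness of $-S|_S$ from that. Smallness is obtained by running a $(K_Y+S+\Delta_Y)$-MMP over $Y'$. Since $-(K_Y+S+\Delta_Y)$ is $f$-ample (hence $g$-ample), the resulting map $g'\colon Y_{\min}\to Y'$ is small. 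The key observation is that $S$ is $g$-numerically trivial (as $-S=g^*A$), so it stays numerically trivial over $Y'$ throughout the MMP; but a divisor contracted by a divisorial step of a $\mbQ$-factorial MMP is always relatively anti-ample, so no step can contract $S$. Since $\Exc(g)\subseteq\Exc(f)=S$, it follows that $g$ itself is small.

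\textbf{F-regularity along $g(S)$.} You try to apply Proposition~\ref{proposition:relative-rationality} directly to $g\colon Y\to Y'$, and for this you need $-(K_Y+S)$ to be $g$-ample. Your decomposition $-(K_Y+S)=-(K_Y+S+\Delta_Y)+\Delta_Y$ does not help: $\Delta_Y$ is a fixed effective divisor from Proposition~\ref{proposition:plt-blow-up}, and there is no reason for it to be $g$-nef, nor any ``suitable choice within the MMP'' available. The paper instead modifies $Y$: it runs a $-(K_Y+S)$-MMP over $Y'$ (realised, for $0<\epsilon\ll 1$, as a $(K_Y+S+(1+\epsilon)\Delta_Y)$-MMP with the $g$-ample twist $-(1+\epsilon)(K_Y+S+\Delta_Y)$), then passes to the semiample fibration of $-(K_Y+S)$ over $Y'$ via the base point free theorem. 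On this new $Y$ (no longer necessarily $\mbQ$-factorial) the pair $(Y,S)$ is still plt and $-(K_Y+S)$ is now $g$-ample, so Proposition~\ref{proposition:relative-rationality} applies.
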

\begin{proof}
The first statement follows  from the base point free theorem (see \cite[Theorem 2.9]{GNT06}) after having noticed that $-S = K_Y + \Delta_Y -(K_Y+S+\Delta_Y)$.

For the proof of the bigness of $-S|_S$ it is enough to show that $g$ is small. To this end, run a $(K_Y+S+\Delta_Y)$-MMP over $Y'$, and let $Y \dashrightarrow Y_{\mathrm{min}} \xrightarrow{g'} Y'$ be the minimal model. Since $-(K_Y+S+\Delta_Y)$ is $g$-ample, $g'$ is small. Moreover, as $S$ is $g$-numerically trivial, none of the steps of the MMP may contract $S$. Indeed, an MMP preserves $\mbQ$-factoriality, and hence a contracted divisor is always relatively anti-ample with respect to a divisorial contraction. Since $\Exc f = S$ is irreducible, this shows that $g$ is small and it does not contract $S$.

For the last statement, we replace $Y$ by the output of a $-(K_Y+S)$-MMP over $Y'$, and so we can assume that $-(K_Y+S)$ is $g$-nef and $g$-big. We can run such an MMP, since we can write
\[
-\epsilon (K_Y+S)=K_Y+S+(1+\epsilon)\Delta_Y-(1+\epsilon)(K_Y+S+\Delta_Y),
\]
 where $-(K_Y + S + \Delta_Y)$ is $g$-ample and $(Y,S +(1+\epsilon)\Delta_Y)$ is plt for $0 < \epsilon \ll 1$.  Further, by replacing $Y$ by the image of the $g$-relative semiample fibration associated to $-(K_Y+S)$, we can assume that $-(K_Y+S)$ is $g$-ample. The fibration exists by the klt base point free theorem, as $S$ is $g$-numerically trivial and we can write
\[
-(K_Y+S) = K_Y - 2(K_Y+S) + S.
\]
Although $Y$ may have ceased to be $\mbQ$-factorial, $(Y,S)$ is a well defined plt pair (in particular, $K_Y+S$ is $\mbQ$-Cartier), and so Proposition \ref{proposition:relative-rationality} concludes the proof.
\end{proof}

\subsection{Liftings}
Let $W_m(k)$ denote the ring of Witt vectors of length $m$. We say that a $k$-variety $X$ \emph{lifts over $W_m(k)$} if there exits a flat morphism $\widetilde{X} \to \Spec W_m(k)$ such that the special fibre is isomorphic to $X$. Similarly, we can define liftings of morphisms.

In Section \ref{section:applications} we will need the following result.
\begin{proposition}[{\cite[Proposition 2.1]{LS14}, \cite[Theorem 3.1]{CS09}}] \label{proposition:pushing_lifts} Let $f \colon Y \to X$ be a morphism of schemes satisfying $Rf_* \mcO_Y = \mcO_X$. Assume that $Y$ lifts to a scheme $\widetilde{Y}_m$ over $W_m(k)$ for some natural number $m \in \mbZ_{>0}$. Then $f$ lifts to  $\widetilde{f}_m \colon \widetilde{Y}_m \to \widetilde{X}_m$ over $W_m(k)$.
\end{proposition}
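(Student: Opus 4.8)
The plan is to build the lift $\widetilde{X}_m$ by declaring its structure sheaf to be the pushforward $f_*\mcO_{\widetilde{Y}_m}$, and to induct on $m$. Observe first that since $p$ is nilpotent in $W_m(k)$, the thickening $\widetilde{Y}_m$ has the same underlying topological space as $Y$, and likewise any flat lift of $X$ has the underlying space of $X$. Hence the continuous map underlying $f$ is common to all the thickenings in sight, and we may push sheaves forward along it no matter which thickening we work on. The base case $m=1$ is trivial, with $\widetilde{X}_1=X$ and $\widetilde{f}_1=f$. So suppose inductively that $\widetilde{f}_{m-1}\colon\widetilde{Y}_{m-1}\to\widetilde{X}_{m-1}$ has been produced, with $\widetilde{X}_{m-1}$ flat over $W_{m-1}(k)$ and $Rf_*\mcO_{\widetilde{Y}_{m-1}}=\mcO_{\widetilde{X}_{m-1}}$.

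The inductive engine is the short exact sequence obtained by tensoring the flat $W_m(k)$-scheme $\widetilde{Y}_m$ with $0\to k\xrightarrow{p^{m-1}}W_m(k)\to W_{m-1}(k)\to 0$, namely
\[
0\to\mcO_Y\xrightarrow{\,p^{m-1}\,}\mcO_{\widetilde{Y}_m}\to\mcO_{\widetilde{Y}_{m-1}}\to 0.
\]
Applying $Rf_*$ and reading off the long exact sequence, the hypothesis $Rf_*\mcO_Y=\mcO_X$ (so that $f_*\mcO_Y=\mcO_X$ and $R^{i}f_*\mcO_Y=0$ for $i>0$) together with the inductive hypothesis kills the connecting maps and forces $R^{i}f_*\mcO_{\widetilde{Y}_m}=0$ for all $i>0$, yielding a short exact sequence of sheaves on $X$
\[
0\to\mcO_X\xrightarrow{\,p^{m-1}\,}f_*\mcO_{\widetilde{Y}_m}\to\mcO_{\widetilde{X}_{m-1}}\to 0.
\]
I would then set $\mcO_{\widetilde{X}_m}\coloneq f_*\mcO_{\widetilde{Y}_m}$, a sheaf of $W_m(k)$-algebras on $X$. (Equivalently, and perhaps more cleanly, flat base change gives $Rf_*\mcO_{\widetilde{Y}_m}\otimes^{L}_{W_m(k)}k\simeq Rf_*\mcO_Y=\mcO_X$, and derived Nakayama then shows in one stroke that $Rf_*\mcO_{\widetilde{Y}_m}$ is a single sheaf, flat over $W_m(k)$, with reduction $\mcO_X$ modulo $p$.)

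It remains to verify that $\widetilde{X}_m\coloneq(X,\mcO_{\widetilde{X}_m})$ is genuinely a scheme, flat over $W_m(k)$, and to produce the morphism. Flatness is read off from the displayed sequence: the $p$-adic filtration of $\mcO_{\widetilde{X}_m}$ has all of its graded pieces isomorphic to $\mcO_X$, so the local criterion of flatness over the Artinian local ring $W_m(k)$ applies, and in particular $\mcO_{\widetilde{X}_m}/(p)\cong\mcO_X$. That $(X,\mcO_{\widetilde{X}_m})$ is a scheme follows because it is a nilpotent thickening of the scheme $X$ by the quasi-coherent ideal $(p)\mcO_{\widetilde{X}_m}$, and such thickenings are automatically schemes. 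Finally, $\widetilde{f}_m$ is $f$ on topological spaces together with the tautological map $\mcO_{\widetilde{X}_m}=f_*\mcO_{\widetilde{Y}_m}\to f_*\mcO_{\widetilde{Y}_m}$; this is a morphism of $W_m(k)$-schemes because it reduces to $f$ modulo the nilpotent ideal $(p)$, and by construction it is compatible with the structure maps to $\Spec W_m(k)$ and restricts to $\widetilde{f}_{m-1}$ after reduction.

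I expect the main obstacle to be precisely the passage from sheaves to schemes in the last paragraph: showing that the a priori ringed space $(X,f_*\mcO_{\widetilde{Y}_m})$ is representable by a scheme flat over $W_m(k)$, rather than merely a sheaf of rings. This is exactly where the vanishing $R^{>0}f_*=0$ is essential, as it is what keeps $Rf_*$ exact along the Witt tower and forces the graded pieces of the $p$-filtration to remain equal to $\mcO_X$; without it the candidate structure sheaf would pick up higher cohomology and fail to be flat. Granting this vanishing, scheme-theoretic representability is the standard fact about nilpotent thickenings and flatness is the local criterion, so no genuinely new difficulty arises beyond assembling these ingredients.
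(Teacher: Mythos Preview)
Your argument is correct and follows the same inductive strategy as the paper: the paper does not include a proof in the final text (it simply cites \cite{LS14} and \cite{CS09}), but a commented-out sketch in the source proceeds exactly as you do---induct on $m$, push forward the short exact sequence $0\to\mcO_Y\to\mcO_{\widetilde{Y}_m}\to\mcO_{\widetilde{Y}_{m-1}}\to 0$, use the vanishing of $R^1f_*\mcO_Y$ to truncate the long exact sequence, and declare $\widetilde{X}_m$ to be the ringed space with structure sheaf $f_*\mcO_{\widetilde{Y}_m}$ (written there as a relative $\Spec$). Your treatment is in fact more careful than the sketch, as you spell out the flatness verification via the $p$-adic filtration and address why the resulting ringed space is a scheme.
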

The same is true for formal lifts to the total Witt ring $W(k)$.

\section{The restriction short exact sequence} \label{section:surfaces}
\begin{proposition} \label{proposition:3folds-ses} Let $(X,S)$ be a plt three-dimensional $\mbQ$-factorial pair defined over an algebraically closed field $k$ of characteristic $p>5$, where $S$ is a prime divisor, and let $\Delta_{\mathrm{diff}}$ be the different. Then for any Weil divisor $D$ on $X$, there exists an effective $\mbQ$-divisor $\Delta_S \leq \Delta_{\mathrm{diff}}$ and a Weil divisor $G$ on $S$ such that $G\sim _{\mathbb Q} K_S+\Delta _S+D|_S$ and the sequence
\[
 0 \to \mcO_X(K_X+D) \to \mcO_X(K_X+S+D) \to \mcO_S(G) \to 0
\]
is exact.
\end{proposition}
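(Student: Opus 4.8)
The plan is to realise the sequence as the standard restriction sequence attached to the inclusion of divisorial sheaves $\mcO_X(K_X+D) \hookrightarrow \mcO_X(K_X+S+D)$, and then to identify its cokernel. Applying Lemma \ref{lemma:CM} with the Weil divisor $K_X+S+D$ playing the role of ``$D$'' there (so that $-S+(K_X+S+D)=K_X+D$), I obtain a short exact sequence $0 \to \mcO_X(K_X+D) \to \mcO_X(K_X+S+D) \to \mcE \to 0$ with $\Supp \mcE = S$. The first task is to verify the hypotheses of the second part of that lemma, namely that both divisorial sheaves satisfy Serre's condition $S_3$. Since $(X,S)$ is plt with $p>5$, Proposition \ref{proposition:plt_are_f-regular} shows that $S$ is normal and $X$ is strongly F-regular along $S$; as $X$ is $\mbQ$-factorial of dimension three, Proposition \ref{proposition:f-regular_are_rational} then gives that every reflexive rank-one sheaf on $X$ is $S_3$ in a neighbourhood of $S$. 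Because $\mcE$ is supported on $S$, it suffices to argue near $S$, so Lemma \ref{lemma:CM} yields that $\mcE$ is reflexive on the normal surface $S$, whence $\mcE \cong \mcO_S(G)$ for some Weil divisor $G$.

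Next I would pin down the divisor $G$. On the open locus $U \subseteq X$ where $S$ is Cartier, whose complement has codimension at least two in $X$, the sequence is the usual restriction sequence of a Cartier divisor, and adjunction identifies $\mcO_S(G)|_{U} \cong \mcO_{S\cap U}\big((K_X+S+D)|_{S\cap U}\big) \cong \mcO_{S\cap U}(K_S+\Delta_{\mathrm{diff}}+D|_S)$. Thus, after fixing compatible representatives of $K_S$ and of the divisor class of $G$, the Weil divisor $G$ and the $\mbQ$-divisor $K_S+\Delta_{\mathrm{diff}}+D|_S$ agree away from the finitely many prime divisors $P$ of $S$ that lie in the non-Cartier locus of $S$. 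This already forces $G \sim_{\mbQ} K_S + \Delta_{\mathrm{diff}} + D|_S$ in a neighbourhood of the generic point of $S$ and on $S\cap U$.

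The remaining and most delicate point is to compare the coefficients of $G$ and of $K_S+\Delta_{\mathrm{diff}}+D|_S$ along each such bad prime divisor $P$. I would localise $X$ at the generic point of $P$, reducing to a two-dimensional plt surface germ $(X,S)$ with $S$ a curve, and compute the stalk of $\mcE = \mcO_X(K_X+S+D)/\mcO_X(K_X+D)$ as a module over the discrete valuation ring $\mcO_{S,P}$. The expected outcome is that the failure of $S$ to be Cartier along $P$ produces a rounding in the restriction map that can only \emph{decrease} the coefficient, i.e.\ $\coeff_P(G) \le \coeff_P\big(K_S+\Delta_{\mathrm{diff}}+D|_S\big)$, while the generic identification of the previous paragraph forces equality at all other prime divisors. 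Defining $\Delta_S$ to be the $\mbQ$-divisor agreeing with $\Delta_{\mathrm{diff}}$ at every prime divisor of $S$ outside the non-Cartier locus and with the (possibly lowered) coefficient $\coeff_P(G)-\coeff_P(K_S+D|_S)$ at each bad $P$, this comparison gives $G \sim_{\mbQ} K_S + \Delta_S + D|_S$ together with $0 \le \Delta_S \le \Delta_{\mathrm{diff}}$. The main obstacle is precisely this local index computation at a non-Cartier codimension-one point of $S$: one must track exactly how the restriction map behaves there and verify that the correction to the different is effective. Here the plt hypothesis, which bounds $\coeff_P(\Delta_{\mathrm{diff}}) < 1$, is what keeps $\Delta_S$ simultaneously effective and dominated by $\Delta_{\mathrm{diff}}$.
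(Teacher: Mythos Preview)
Your first paragraph is exactly right and matches the paper: strong F-regularity along $S$ (Proposition~\ref{proposition:plt_are_f-regular}) plus Proposition~\ref{proposition:f-regular_are_rational} give $S_3$ for every reflexive rank-one sheaf near $S$, so Lemma~\ref{lemma:CM} makes the cokernel $\mcE$ reflexive on the normal surface $S$, hence $\mcE \cong \mcO_S(G)$ for some Weil divisor $G$.

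The gap is in the identification of $G$. You correctly isolate the problem as a local computation at each prime $P \subset S$ lying in the non-Cartier locus, but you do not carry it out; the sentence beginning ``the expected outcome is'' is precisely where the proof is missing. The paper does not compute the quotient module directly over $\mcO_{S,P}$. Instead it passes to a log resolution $\pi\colon\overline X\to X$: on $\overline X$ one has the honest Cartier restriction sequence for $K_{\overline X}+\overline S+\lceil\pi^*D\rceil$, and pushing it forward---using the relative surface vanishing $R^1\pi_*\mcO_{\overline X}(K_{\overline X}+\lceil\pi^*D\rceil)=0$ of Theorem~\ref{thm:relKV} after localising at one-dimensional points of $X$---shows that $\mcE$ agrees in codimension one on $S$ with $(\pi|_{\overline S})_*\mcO_{\overline S}(K_{\overline S}+\lceil\pi^*D\rceil|_{\overline S})$. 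This yields the explicit formula $\Delta_S=(\pi|_{\overline S})_*\bigl((\lceil\pi^*D\rceil-\pi^*D)|_{\overline S}\bigr)$, from which effectiveness is immediate. The inequality $\Delta_S\le\Delta_{\mathrm{diff}}$ is then the content of Lemma~\ref{lemma:surface}: at each such $P$ the plt surface singularity is cyclic of some index $m_P$, so $m_PD$ is Cartier and hence $m_P\Delta_S$ is integral, while $\coeff_P(\Delta_{\mathrm{diff}})=1-\tfrac{1}{m_P}$; together with $\coeff_P(\Delta_S)<1$ this forces $\coeff_P(\Delta_S)\le 1-\tfrac{1}{m_P}$. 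Your closing remark invokes only the bound $\coeff_P(\Delta_{\mathrm{diff}})<1$, but that alone is not what drives the inequality---it is the integrality of $m_P\Delta_S$ coming from the Cartier index that does the work, and extracting this without the resolution would amount to redoing the same cyclic-quotient computation by hand.
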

Let us note that $G$ is only defined up to linear equivalence. 
\begin{proof}
Note that in proving the proposition, we are free to replace $X$ by an appropriate neighborhood of $S$.
By Proposition \ref{proposition:plt_are_f-regular}, we may assume that $X$ is strongly F-regular and $S$ is normal. Replacing $D$ by a linearly equivalent divisor, we may assume that $S$ is not contained in the support of $D$ and so $D|_S$ is a well defined $\mathbb Q$-divisor. Consider the following short exact sequence (see Lemma \ref{lemma:CM})
\[
0 \to \mcO_X(K_X + D) \to \mcO_X(K_X+S+D) \to i_*\mcE \to 0,
\]
where $\mcE$ is a sheaf supported on $S$, and $i \colon S \to X$ is the inclusion. By Proposition \ref{proposition:f-regular_are_rational} any reflexive rank one sheaf on $X$ satisfies Serre's condition $S_3$  and so,  by Lemma \ref{lemma:CM}, $\mcE$ is reflexive on $S$.

Let $\pi \colon \overline{X} \to X$ be a log resolution of $(X,S)$ and let $\overline{S}$ be the strict transform of $S$. Define $\Delta_{\overline{X}}$ by 
\[
K_{\overline{X}} + \overline{S} + \Delta_{\overline{X}} = \pi^*(K_X+S).
\] 
Consider the following exact sequence
\[
0 \to \mcO_{\overline{X}}(K_{\overline{X}} + \lceil \pi^*D \rceil ) \to \mcO_{\overline{X}}(K_{\overline{X}} + \overline{S} + \lceil \pi^*D \rceil) \to \mcO_{\overline{S}}(K_{\overline{S}}  + \lceil \pi^*D \rceil|_{\overline{S}}) \to 0.
\]

Since $(X,S)$ is plt, we have $\lfloor \Delta_{\overline{X}} \rfloor \leq 0$, and so $\lceil \pi^*D \rceil \geq \lfloor \Delta_{\overline{X}} + \pi^*D \rfloor$. Therefore
\begin{align*}
 K_{\overline{X}} + \overline{S} + \lceil \pi^*D \rceil &\geq \lfloor K_{\overline{X}} + \overline{S} + \Delta_{\overline{X}} + \pi^*D \rfloor \\
&= \lfloor \pi^*(K_X+S+D) \rfloor, \text{ and}\\
 K_{\overline{X}} + \lceil \pi^*D \rceil &\geq \lfloor K_{\overline{X}} + \Delta_{\overline{X}} + \pi^*D \rfloor \\
&\geq \lfloor \pi^*(K_X+D) \rfloor.
\end{align*}
In particular,
\begin{align*}
	\pi_*\mcO_{\overline{X}}(K_{\overline{X}} + \lceil \pi^*D \rceil) &= \mcO_X(K_X+D), \text{ and } \\
	\pi_*\mcO_{\overline{X}}(K_{\overline{X}} + \overline{S} + \lceil \pi^*D \rceil) &= \mcO_X(K_X+S+D).
\end{align*}

By applying $\pi_*$ to the above exact sequence, we have that
\[
0 \to \mcO_X(K_X+D) \to \mcO_X(K_X+S+D) \to \pi _*\mcO _{\overline S}(\overline G) \to  R^1\pi_* \mcO_{\overline{X}}( K_{\overline{X}} + \lceil \pi^*D \rceil)
\] 
is exact, where $\overline G \coloneq K_{\overline{S}}  + \lceil \pi^*D \rceil|_{\overline{S}}$. By localizing at one dimensional points on $X$, Lemma \ref{lemma:surface} shows that 
\[
\dim \mathrm{Supp}(R^1\pi_* \mcO_{\overline{X}}(K_{\overline{X}}+ \lceil \pi^*D \rceil )) \leq  0.
\]

Set $\Delta_S \coloneq (\pi|_{\bar S})_*((\lceil \pi^*D \rceil-\pi^*D)|_{\overline{S}})$ and $G \coloneq (\pi|_{\bar S})_*\overline G$ so that $G \sim_{\mathbb Q} K_S + \Delta_S + D|_S$. 

We claim that $\mcE \simeq \mcO _S(G)$. 
Indeed, by what we observed above, $\mcE$ is isomorphic to $ \pi _*\mcO _{\overline S}(\overline G) $ in codimension one on $S$, which in turn is isomorphic to $\mcO_S(G)$ in codimension one on $S$. Since both $\mcE$ and $\mcO_S(G)$ are reflexive sheaves of rank one, the isomorphism $\mcE \simeq \mcO _S(G)$ follows.

To conclude the proof, it is enough to verify that $\Delta_S \leq \Delta_{\mathrm{diff}}$. This can be checked by localizing at one dimensional points in $\mathrm{Sing}(X)$, and thus we conclude by Lemma \ref{lemma:surface} (see also \cite{kollaretal}*{Corti 16.6.3}).
\end{proof}

\begin{remark} Proposition \ref{proposition:3folds-ses} also holds in the following two cases:
\begin{enumerate} 
	\item if $(X,S)$ is a $n$-dimensional $\mbQ$-factorial plt pair and $D$ is a Weil divisor, defined over an algebraically closed  field of characteristic $0$, and
	\item if $(X,S)$ is a $n$-dimensional $\mbQ$-factorial plt pair and $D$ is a Weil divisor, defined over an algebraically closed  field of characteristic $p>0$ such that $(X,S)$ admits a log resolution of singularities and $(S^\nu , \Delta _{\rm diff})$ is strongly F-regular where $S^\nu \to S$ is the normalization.
\end{enumerate}

For case (1) recall that since $(X,S)$ is plt, then $S$ is normal and both $\mathcal O_X(K_X+D)$ and $\mathcal O _X(K_X+S+D)$ are Cohen Macaulay sheaves by \cite[Corollary 5.25]{KM98}. For case (2) recall that by \cite{Das15}, $S$ is normal and $(X,S)$ is purely F-regular near $S$.
\end{remark}

The following lemma, used in the above proof, was applied to localizations at non-closed points, and thus we cannot assume that $k$ is algebraically closed. However this has no impact on the proof, because the surfaces under  consideration are excellent. We refer to \cite{kollar13} and \cite{tanaka16_excellent} for the classification and basic results pertaining to excellent log canonical surfaces. Let us just note that excellent klt surfaces are $\mbQ$-factorial by \cite[Corollary 4.10]{tanaka16_excellent}, and if $(X,S)$ is a plt surface pair, then $S$ is regular (\cite[3.35]{kollar13}).
\begin{lemma} \label{lemma:surface} Let $(X,S)$ be a plt surface pair defined over an arbitrary field $k$, where $S$ is a prime divisor. Let $\Delta_{\mathrm{diff}}$ be the different, let $\pi \colon \overline{X} \to X$ be a log resolution of $(X,S)$, let $\overline{S}$ be the strict transform of $S$, let $D$ be any Weil divisor. Then 
\begin{itemize}
	\item $R^1\pi_*\mcO_{\overline{X}}(K_{\overline{X}} +  \lceil \pi^*D \rceil) = 0$, and
	\item $\Delta_S \, {\leq} \, \Delta_{\mathrm{diff}}$, where $\Delta_S \, {\coloneq}\, (  \lceil \pi^*D \rceil{-}\pi^*D)|_{\overline{S}}$, and $S$ is identified with $\overline{S}$.
\end{itemize}
\end{lemma}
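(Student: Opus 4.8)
Both assertions are local on $X$, so I would localise at a closed point $q\in S$; then the relevant exceptional locus $\bigcup_i C_i$ of $\pi$ is connected and $\overline S$ meets it in finitely many points. Since $(X,S)$ is plt, $S$ is regular, and as $\pi$ is a log resolution the divisor $\overline S+\Exc(\pi)$ is simple normal crossing; in particular $\pi|_{\overline S}\colon \overline S\to S$ is a birational morphism of regular curves, hence an isomorphism (this is the identification $S\cong\overline S$ in the statement), and $\overline S$ meets each $C_i$ transversally. For the first bullet, the plan is to apply Theorem \ref{thm:relKV} to $\pi$ and $L\coloneq\mcO_{\overline X}(K_{\overline X}+\lceil\pi^*D\rceil)$. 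Writing $\lceil\pi^*D\rceil=\pi^*D+\Delta$ with $\Delta\coloneq\lceil\pi^*D\rceil-\pi^*D$, the coefficients of $\pi^*D$ along the strict transform of $D$ are integral, so $\Delta$ is supported on the exceptional curves $C_i$ and each of its coefficients $d_i=\lceil(\pi^*D)_{C_i}\rceil-(\pi^*D)_{C_i}$ lies in $[0,1)$. Taking $N\coloneq\pi^*D$ gives $L=K_{\overline X}+\Delta+N$ as $\mbQ$-divisors, while $N\cdot C_i=D\cdot\pi_*C_i=0$ by the projection formula. Hence the three hypotheses of Theorem \ref{thm:relKV} hold and $R^1\pi_*L=0$.

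For the second bullet, I would restrict the adjunction identity $\pi^*(K_X+S)=K_{\overline X}+\overline S+\Delta_{\overline X}$ to $\overline S$. Because $\overline S+\Exc(\pi)$ is snc we have $(K_{\overline X}+\overline S)|_{\overline S}=K_{\overline S}$, and $\pi^*(K_X+S)|_{\overline S}=(\pi|_{\overline S})^*\big((K_X+S)|_S\big)$; comparing with $K_S+\Delta_{\mathrm{diff}}=(K_X+S)|_S$ yields, under $S\cong\overline S$, the equality $\Delta_{\mathrm{diff}}=\Delta_{\overline X}|_{\overline S}$. By snc, at the point $p\in\overline S$ over $q$ exactly one exceptional curve $C$ meets $\overline S$, transversally; thus the coefficient of $\Delta_{\mathrm{diff}}$ at $q$ is $(\Delta_{\overline X})_C$, while the coefficient of $\Delta_S$ at $q$ is $\lceil(\pi^*D)_C\rceil-(\pi^*D)_C$. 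The inequality $\Delta_S\le\Delta_{\mathrm{diff}}$ therefore reduces to the single numerical inequality $\lceil(\pi^*D)_C\rceil-(\pi^*D)_C\le(\Delta_{\overline X})_C$.

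To establish this I would invoke the classical computation of the different for plt surface singularities (\cite{kollaretal}*{Corti 16.6.3}): the germ $(X,q)$ is a cyclic quotient singularity whose dual graph is a chain, $\overline S$ meets an end $C$ of that chain, and $(\Delta_{\overline X})_C=1-\tfrac1m$, where $m$ is the order of the (cyclic) local class group of $X$ at $q$, which is generated by $[S]$. Since $D$ is a Weil divisor its Cartier index at $q$ divides $m$, so $m\,\pi^*D$ is integral and $(\pi^*D)_C\in\tfrac1m\mbZ$; consequently $\lceil(\pi^*D)_C\rceil-(\pi^*D)_C\in\{0,\tfrac1m,\dots,\tfrac{m-1}{m}\}$ is at most $1-\tfrac1m=(\Delta_{\overline X})_C$, which is exactly $\Delta_S\le\Delta_{\mathrm{diff}}$.

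The routine steps — the localisation, the snc bookkeeping, and the projection-formula computation — are straightforward, and the first bullet is essentially immediate from Theorem \ref{thm:relKV}. The real content lies in the second bullet, specifically in pinning down $(\Delta_{\overline X})_C=1-\tfrac1m$ for an index $m$ that simultaneously controls the Cartier index of \emph{every} Weil divisor at $q$; this is where I expect the main difficulty. Moreover, since the lemma is applied to the localisations of a threefold at one-dimensional points, the ambient surfaces live over possibly imperfect, non-closed residue fields, so one must ensure the surface different computation is valid in that generality — this is guaranteed by the excellence of the surfaces involved together with the classification of excellent log terminal surface singularities.
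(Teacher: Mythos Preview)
Your proposal is correct and follows essentially the same approach as the paper: the first bullet is an immediate application of Theorem~\ref{thm:relKV} with $\Delta=\lceil\pi^*D\rceil-\pi^*D$ and $N=\pi^*D$, and the second bullet reduces to the classical different computation (\cite{kollaretal}*{Corti 16.6.3}) showing that the coefficient of $\Delta_{\mathrm{diff}}$ at a singular point is $1-\tfrac1m$ for $m$ the local $\mbQ$-factorial index, whence $m\Delta_S$ is integral with coefficients $<1$. Your write-up simply unpacks more of the snc bookkeeping (the identification $\Delta_{\mathrm{diff}}=\Delta_{\overline X}|_{\overline S}$ and the transversal meeting of $\overline S$ with a unique exceptional curve), and the remark that $\overline S$ meets ``an end $C$ of the chain'' is literally accurate only for the minimal resolution --- but your argument does not actually use that, since the identity $(\Delta_{\overline X})_C=1-\tfrac1m$ follows for any log resolution from resolution-independence of the different.
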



\begin{proof}
As $\lfloor \lceil \pi^*D\rceil- \pi^*D \rfloor =0$, the vanishing
\[
R^1\pi_*\mcO_{\overline{X}}(K_{\overline{X}} + \lceil \pi^*D\rceil) = 0
\]
follows by Theorem \ref{thm:relKV}. 

As for the second statement, we can restrict  ourselves to a neighbourhood of $S$ so that all singularities of $X$ lie on $S$. Therefore, \cite[3.35]{kollar13} implies that each singularity of $X$ is cyclic and $S$ is regular. Moreover, if $x \in \mathrm{Sing}(X)$ and $m_x$ is the $\mbQ$-factorial index of $X$ at $x$, then $m_xD$ is Cartier and $\Delta_{\mathrm{diff}}$ has coefficient $1 - \frac{1}{m_x}$ at $x$ (\cite{kollaretal}*{Corti 16.6.3}). Therefore, $\Delta_S \leq \Delta_{\mathrm{diff}}$,
because, by definition, $\Delta_S$ has coefficients smaller than one, and $m_x\Delta_S$ is Cartier at $x \in \mathrm{Sing}(X)$.

\end{proof}

By applying Proposition \ref{proposition:3folds-ses} for $D$ replaced by $D-(K_X+S)$, we get that
\[
0 \to \mcO_X(-S+D) \to \mcO_X(D) \to \mcO_S(G) \to 0
\]
is exact {for a $\mbQ$-divisor $0 \leq \Delta_S \leq \Delta_{\mathrm{diff}}$ and a Weil divisor $G \sim_{\mbQ} D|_S-\Delta _S$. }

\begin{corollary} \label{corollary:3folds-ses} Under the assumptions of Propostion \ref{proposition:3folds-ses}, the following sequences
\begin{align*}
0 &\to \mcO_X(K_X-(k+1)S-D) \to \mcO_X(K_X-kS-D) \to \mcO_S(G'_k) \to 0  \\
0 &\to \mcO_X(-(k+1)S+D) \to \mcO_X(-kS+D) \to \mcO_S(G_k) \to 0
\end{align*}
are exact for any $k \in \mbZ_{>0}$,  some effective $\mbQ$-divisors $\Delta_k, \Delta'_k \leq \Delta_{\mathrm{diff}}$ depending on $k$, and some integral divisors $G'_k \sim_{\mbQ}K_S+\Delta' _k-(k+1)S|_S -D|_S$ and {$G_k \sim_{\mbQ} -kS|_S + D|_S - \Delta_k$}.
\end{corollary}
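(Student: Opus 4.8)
The plan is to deduce both exact sequences directly from Proposition \ref{proposition:3folds-ses} by choosing the input Weil divisor appropriately and re-indexing. For the first sequence, I would apply Proposition \ref{proposition:3folds-ses} with the Weil divisor $-(k+1)S - D$ in place of $D$. This produces an effective $\mbQ$-divisor $\Delta'_k \leq \Delta_{\mathrm{diff}}$ and a Weil divisor $G'_k$ on $S$ with $G'_k \sim_{\mbQ} K_S + \Delta'_k + (-(k+1)S-D)|_S = K_S + \Delta'_k - (k+1)S|_S - D|_S$, together with the short exact sequence
\[
0 \to \mcO_X\big(K_X - (k+1)S - D\big) \to \mcO_X\big(K_X + S - (k+1)S - D\big) \to \mcO_S(G'_k) \to 0.
\]
Since $K_X + S - (k+1)S - D = K_X - kS - D$, this is exactly the first claimed sequence. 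Note that the dependence of $\Delta'_k$ and $G'_k$ on $k$ is automatic, since the input divisor to Proposition \ref{proposition:3folds-ses} depends on $k$.

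For the second sequence, the cleanest route is to invoke the remark immediately following Lemma \ref{lemma:surface}, where Proposition \ref{proposition:3folds-ses} with $D$ replaced by $D - (K_X+S)$ yields the exact sequence
\[
0 \to \mcO_X(-S + D) \to \mcO_X(D) \to \mcO_S(G) \to 0,
\]
with $0 \leq \Delta_S \leq \Delta_{\mathrm{diff}}$ and $G \sim_{\mbQ} D|_S - \Delta_S$. Here I would apply this displayed form with $D$ replaced by $-kS + D$. This gives an effective $\mbQ$-divisor $\Delta_k \leq \Delta_{\mathrm{diff}}$ and a Weil divisor $G_k$ with $G_k \sim_{\mbQ} (-kS+D)|_S - \Delta_k = -kS|_S + D|_S - \Delta_k$, and the exact sequence
\[
0 \to \mcO_X\big(-S + (-kS + D)\big) \to \mcO_X(-kS + D) \to \mcO_S(G_k) \to 0,
\]
whose leftmost term is $\mcO_X(-(k+1)S + D)$, as required.

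The argument is essentially bookkeeping: both sequences are substitution instances of Proposition \ref{proposition:3folds-ses}, so the only thing to verify is that the linear-equivalence formulas for $G'_k$ and $G_k$ and the containments $\Delta_k, \Delta'_k \leq \Delta_{\mathrm{diff}}$ come out correctly after the substitutions and the rewriting $K_X + S - (k+1)S - D = K_X - kS - D$. I do not anticipate a genuine obstacle here, since Proposition \ref{proposition:3folds-ses} is stated for an \emph{arbitrary} Weil divisor $D$ and its conclusions transport verbatim under the replacements; the only mild subtlety is to remember that $G'_k$ and $G_k$ are defined only up to linear equivalence, so the equivalences $\sim_{\mbQ}$ in the statement, rather than equalities, are the correct assertions.
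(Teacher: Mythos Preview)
Your proposal is correct and matches the paper's approach exactly: the paper states the corollary without proof, relying on the displayed sequence immediately preceding it (obtained from Proposition \ref{proposition:3folds-ses} by replacing $D$ with $D-(K_X+S)$) together with the obvious substitutions $D \mapsto -(k+1)S - D$ and $D \mapsto -kS + D$ that you spell out. Your write-up simply makes explicit the bookkeeping the paper leaves to the reader.
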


\section{Proof of the main theorem}
\begin{proof}[Proof of Theorem \ref{theorem:main}]
{Fix $p_0$ as in Theorem \ref{theorem:CTW}.} Since the question is local, we can assume that $X$ is affine, and aim to show that a fixed closed point $x \in X$ is rational. We replace $X$ by a Zariski-open neighbourhood of $x$ whenever convenient. Let $f \colon Y \to X$ be a generalized plt blow-up at $x$ as in Proposition \ref{proposition:plt-blow-up}. Let $S \coloneq \Exc(f)$. Recall that $(Y,S+\Delta_Y)$ is plt, $-S$ is $f$-nef, and $-(K_Y+S+\Delta_Y)$ is $f$-ample for some effective $\mbQ$-divisor $\Delta_Y$ on $Y$. Moreover, we may assume that $Y$ is strongly F-regular. First, we show that $Rf_*\mcO_Y = \mcO_X$.

To this end, we apply Corollary \ref{corollary:3folds-ses} to get short exact sequences
\[
0 \to \mcO_Y(-(k+1)S) \to \mcO_Y(-kS) \to \mcO_S(G_k) \to 0
\]
for all $k\geq 0$ where $G_k \sim_{\mbQ}  -kS|_S-\Delta _k$ for some effective $\mbQ$-divisor $\Delta_k \leq \Delta_{\mathrm{diff}}$, where $\Delta_{\mathrm{diff}}$ is the different. Thus, to show that $R^if_*\mcO_Y(-kS)$ vanishes for $k=0$ and $i>0$, it is enough to prove that
\begin{itemize}
 	\item $R^if_*\mcO_Y(-mS) = 0$ for $i>0$ and a divisible enough $m \gg 0$, and
 	\item $H^i(S, \mcO_S(G_k))=0$ for $i>0$ and all $k\geq 0$.
 \end{itemize} 

If $X$ is $\mbQ$-factorial, then the first vanishing follows from the relative Serre vanishing, as $-S$ is $f$-ample. In general, we consider the $f$-relative semiample fibration $Y \xrightarrow{g} Y' \xrightarrow{f'} X$ associated to $-S$. It exists by Proposition \ref{proposition:non-Q-factorial-case}. Since $Y'$ is strongly F-regular along $S_{Y'} \coloneq g(S)$, we may assume that $Y'$ has rational singularities (see Proposition \ref{proposition:f-regular_are_rational}). For $m>0$ sufficiently divisible, $mS_{Y'}$ is Cartier and $mS=g^*(mS_{Y'})$. By the projection formula, we have $Rg_* \mcO_Y(-mS) =g_* \mcO_Y(-mS) =\mcO_{Y'}(-mS_{Y'})$ and therefore $R^if_*\mcO_Y(-mS) = R^if'_*\mcO_{Y'}(-mS_{Y'})$, which is zero for $m\gg 0$ by Serre vanishing since $S_{Y'} $ is $f'$-anti-ample. 

Therefore, it suffices to show that $H^i(S, \mcO_S(G_k))=0$
for $i>0$ and all $k\geq 0$. To this end, we notice that \[ (K_Y+S+\Delta_Y)|_S = K_S + \Delta_{\mathrm{diff}} + \Delta_Y|_S \] is anti-ample and $(S, \Delta_{\mathrm{diff}} + \Delta_Y|_S)$ is klt. The cohomology in question vanishes because
\[
G_k\sim _\mbQ K_S + (\Delta_{\mathrm{diff}} + \Delta_Y|_S - \Delta_k) \underbrace{-(K_S + \Delta_{\mathrm{diff}}+\Delta_Y|_S) -kS|_S}_{\text{ample}},
\]
which is zero by Theorem \ref{theorem:CTW}.\\

By an analogous argument, considering the other short exact sequence in Corollary \ref{corollary:3folds-ses}
\[
0 \to \mcO_Y(K_Y-(k+1)S) \to \mcO_Y(K_Y-kS) \to \mcO_S(G'_k) \to 0,
\]
one can show that $R^if_*\omega_Y = 0$ for $i>0$. Indeed, $Rg_* \omega_Y = \omega_{Y'}$ by Proposition \ref{prop:rational_and_CM_is_good} applied to both $Y$ and $Y'$. As $S$ is $g$-trivial, we have 
\[
R^if_*\mcO_Y(K_Y - mS) = R^if'_*\mcO_{Y'}(K_{Y'} - mS_{Y'}) = 0,
\]
 for $i>0$ and a divisible enough $m\gg 0$ by the relative Serre vanishing. Moreover, by Theorem \ref{theorem:CTW}
 \[
 H^i(S, \mcO _S (G'_k))=0\qquad \forall i>0
 \]
 for  any $k\geq 0$ and a divisor $G'_k$ such that $G'_k\sim _\mbQ K_S + \Delta' _k- (k+1)S|_S$ where $\Delta' _k $ is an effective $\mbQ$-divisor satisfying $\Delta' _k \leq \Delta_{\mathrm{diff}}$, because $-S|_S$ is nef and big (see Proposition \ref{proposition:non-Q-factorial-case}).

Proposition \ref{proposition:f-regular_are_rational} implies that $Y$ is Cohen-Macaulay, and so $X$ is Cohen-Macaulay as well by Proposition \ref{proposition:CM_kollar}. Let $\pi_Y \colon \overline{Y} \to Y$ be a log resolution. By Proposition \ref{proposition:f-regular_are_rational}, we have that $R(\pi_Y)_* \mcO_{\overline{Y}} = \mcO_Y$, and so, by the composition of derived functors, we get that $R\pi_*\mcO_{\overline{Y}} = \mcO_X$, where $\pi \coloneq f \circ \pi_Y \colon \overline{Y} \to X$. Thus $X$ has rational singularities. \\

In order to prove the last statement, we assume that $X$ is $\mbQ$-factorial, fix a Weil divisor $D$ on $X$, and set $D_Y \coloneq \lfloor f^* D \rfloor = f^*D - \epsilon S$ for some $0 \leq \epsilon < 1$. 

First, we show that $Rf_*\mcO_Y(D_Y) = \mcO_X(D)$. By Corollary \ref{corollary:3folds-ses}, we get short exact sequences
\[
0 \to \mcO_Y(-(k+1)S+D_Y) \to \mcO_Y(-kS+D_Y) \to \mcO_S(G_k) \to 0
\]
for all $k\geq 0$ where $G_k \sim_{\mbQ}  -(k+\epsilon)S|_S-\Delta _k$ for some effective $\mbQ$-divisor $\Delta_k \leq \Delta_{\mathrm{diff}}$. As in the argument above, when $i>0$ we have
\begin{align*}	
	H^i(S, \mcO_S(G_k)) &= 0, \text{ and }  \\
	R^if_*\mcO_Y(-mS+D_Y) &= 0 \text{ for divisible enough } m \gg 0,
\end{align*}
where the second identity follows from Serre's vanishing as $-S$ is $f$-ample, and the first one is a consequence of Theorem \ref{theorem:CTW} given that
\[
G_k\sim _\mbQ K_S + (\Delta_{\mathrm{diff}} + \Delta_Y|_S - \Delta_k) \underbrace{-(K_S + \Delta_{\mathrm{diff}}+\Delta_Y|_S) -(k+\epsilon)S|_S}_{\text{ample}}.
\] 
The claim now follows easily by an argument similar to the one we used in the proof that the singularities are rational.

Analogously, one can show that $R^if_*\mcO_Y(K_Y-D_Y) = 0$ for $i>0$ by considering the other short exact sequence in Corollary \ref{corollary:3folds-ses}
\[
0 \to \mcO_Y(K_Y-(k+1)S-D_Y) \to \mcO_Y(K_Y-kS-D_Y) \to \mcO_S(G'_k) \to 0.
\]
Indeed, $R^if_*\mcO_Y(K_Y - mS - D_Y) = 0$ for sufficiently divisible $m\gg 0$ by Serre vanishing. Moreover, by Theorem \ref{theorem:CTW}
 \[
 H^i(S, \mcO _S (G'_k))=0\qquad \forall i>0
 \]
 for  any $k\geq 0$ and a divisor $G'_k$ such that $G'_k\sim _\mbQ K_S + \Delta' _k- (k+1-\epsilon)S|_S$ where $\Delta' _k $ is an effective $\mbQ$-divisor satisfying $\Delta' _k \leq \Delta_{\mathrm{diff}}$.

Since $\mcO_Y(D_Y)$ is Cohen-Macaulay (see Proposition \ref{proposition:f-regular_are_rational}), Proposition \ref{proposition:CM_kollar} concludes the proof.
\end{proof}

\section{Applications and open problems} \label{section:applications}
\begin{corollary}\label{c:lift} There exists a constant $p_0 >0$ with the following property.

Let $(X,\Delta_X)$ and $(Y,\Delta_Y)$ be Kawamata log terminal threefolds defined over an algebraically closed field $k$ of characteristic $p>p_0$, and let $\pi \colon X \to Y$ be a proper birational morphism between them. If $X$ lifts to $\widetilde{X}_m$ over $W_m(k)$ for some $m \in \mbN$, then $\pi$ lifts to $\widetilde{\pi}_m \colon \widetilde{X}_m \to \widetilde{Y}_m$.
\end{corollary}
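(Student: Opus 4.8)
The plan is to reduce the statement to Proposition \ref{proposition:pushing_lifts}, whose hypotheses are that the source variety lifts over $W_m(k)$ and that the derived pushforward preserves the structure sheaf. First I would fix $p_0$ to be the constant of Theorem \ref{theorem:main}, so that the underlying varieties of both $(X,\Delta_X)$ and $(Y,\Delta_Y)$, being klt threefolds over an algebraically closed field of characteristic $p>p_0$, have rational---and in particular Cohen--Macaulay---singularities.

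The heart of the argument is to verify that $R\pi_* \mcO_X = \mcO_Y$. Since $\pi$ is a proper birational morphism of normal varieties (klt implies normal), the equality $\pi_* \mcO_X = \mcO_Y$ holds automatically, so the content is the vanishing $R^i\pi_* \mcO_X = 0$ for $i > 0$. To obtain this, I would choose a resolution $g \colon \overline{X} \to X$, which exists since $\dim X \leq 3$. Then $\pi \circ g \colon \overline{X} \to Y$ is again a proper birational morphism from a regular variety, hence a resolution of $Y$. Because both $X$ and $Y$ have rational singularities, the ``for every resolution'' reformulation recalled in Subsection \ref{ss:rat-cm-sing} (relying on \cite{CR15}) yields $Rg_*\mcO_{\overline{X}} = \mcO_X$ and $R(\pi \circ g)_* \mcO_{\overline{X}} = \mcO_Y$. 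Combining these through the composition of derived pushforwards,
\[
\mcO_Y = R(\pi \circ g)_* \mcO_{\overline{X}} = R\pi_*\big(Rg_* \mcO_{\overline{X}}\big) = R\pi_* \mcO_X,
\]
which is precisely what is required.

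With $R\pi_* \mcO_X = \mcO_Y$ established and $X$ lifting to $\widetilde{X}_m$ by hypothesis, I would apply Proposition \ref{proposition:pushing_lifts}, in which $\pi$ plays the role of $f$ and $X$, $Y$ play the roles of the source and target respectively, to produce the desired lift $\widetilde{\pi}_m \colon \widetilde{X}_m \to \widetilde{Y}_m$ (the proposition simultaneously furnishes the target lift $\widetilde{Y}_m$). The main---and essentially only---obstacle is the identity $R\pi_*\mcO_X = \mcO_Y$: everything hinges on invoking rationality for both $X$ and $Y$ at once, which is exactly what Theorem \ref{theorem:main} supplies in large characteristic. Once that is in hand, the remainder is a formal manipulation of derived functors together with the black-box Proposition \ref{proposition:pushing_lifts}.
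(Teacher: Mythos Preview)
Your proposal is correct and follows essentially the same approach as the paper, which simply states that the corollary follows from Theorem~\ref{theorem:main} and Proposition~\ref{proposition:pushing_lifts}. You have merely spelled out the derived-functor step (that rationality of both $X$ and $Y$ forces $R\pi_*\mcO_X=\mcO_Y$) which the paper leaves implicit; the only cosmetic point is to choose $g$ so that $\pi\circ g$ is also a log resolution of $Y$, matching the paper's definition of rational singularities.
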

\begin{proof}
This follows from Theorem \ref{theorem:main} and Proposition \ref{proposition:pushing_lifts}.
\end{proof}

With Corollary \ref{c:lift} in mind, it is natural to ask the following.
\begin{question}\label{q:lift} Let $X$ be a Kawamata log terminal threefold defined over an algebraically closed field $k$ of characteristic $p\gg 0$ and $X\dasharrow X^+$ a flip. If $X$ lifts to $\widetilde X _m $ over $W_m(k)$ for some $m\in \mathbb N$, then does   $X^+$ lift to $\widetilde X ^+_m $ over $W_m(k)$?
\end{question}

\begin{remark} \label{remark:quotient} After the first version of the paper had been announced, we were informed by Professor Takehiko Yasuda that for any algebraically closed field $k$ of characteristic $p>2$ there exists a quotient klt singularity of dimension $d$ depending on $p$ which is not Cohen-Macaulay. For $p\geq 5$, his construction is the following. Let $X = V / G$, where $V$ is a $d$-dimensional $k$-vector space, $\frac{d(d-1)}{2} \geq p \geq d \geq 4$, and $G \simeq \mathbb{F}_p$ is a subgroup of $\GL_d(k)$ generated by a matrix of the form
\[
\begin{bmatrix}
    1 & 1 & 0 & 0 & \dots & 0 & 0 \\
    0 & 1 & 1 & 0 & \dots & 0 & 0 \\
    0 & 0 & 1 & 1 & \dots & 0 & 0 \\ 
    0 & 0 & 0 & 1 & \dots & 0 & 0 \\ 
    & & &  & \ddots  & \\ 
    0 & 0  & 0 & 0 & \dots & 1 & 1 \\
    0 & 0  & 0 & 0 & \dots & 0 & 1
  \end{bmatrix},
  \]
 that is ${\rm Id} + I$. Here ${\rm Id}$ is the identity matrix and $I$ is a matrix with ones just above the diagonal and zeroes elsewhere. We have $({\rm Id} + I)^p = {\rm Id}$, because $p \geq d$. Since the action of $G$ on $V$ is indecomposable and $\frac{d(d-1)}{2} \geq p$, the variety $X$ is klt by \cite[Proposition 6.6 and Proposition 6.9]{Yasuda}. Since $\dim V^G = 1 < d - 2$, it is not Cohen-Macaulay by the main result in \cite{ellingsrud-skjelbred} (see the first paragraph of the article). 

 By a similar construction one can get an example of a non-Cohen-Macaulay quotient klt singularity for $p=3$.

\end{remark}
It is also natural to wonder what the optimal value for the constant $p_0$ is.
\begin{question} Let $X$ be a three dimensional klt variety defined over an algebraically closed field of characteristic $p>2$. Does $X$ have rational singularities?
\end{question}
\section*{Acknowledgements}
We would like to thank Fabio Bernasconi, Paolo Cascini, J{\'a}nos Koll{\'a}r, S{\'a}ndor Kov{\'a}cs, Karl Schwede, Hiromu Tanaka, and Takehiko Yasuda for comments and helpful suggestions.

The first author was partially supported by NSF research grants no: DMS-1300750, DMS-1265285 and by a grant
from the Simons Foundation; Award Number: 25620. The second author was supported by the Engineering and Physical Sciences Research Council [EP/L015234/1].

\bibliographystyle{amsalpha}
\bibliography{Library}

\end{document}